\newtheorem{thm}{Theorem}[section]
\newtheorem{lem}[thm]{Lemma}
\newtheorem{fact}[thm]{Fact}
\newtheorem{prop}[thm]{Proposition}
\theoremstyle{definition}
\newtheorem{problem}[thm]{Problem}
\newtheorem{defn}[thm]{Definition}
\theoremstyle{remark}
\newtheorem{rmk}[thm]{Remark}
\numberwithin{equation}{section}
\newcommand{\vertiii}[1]{{\left\vert\kern-0.25ex\left\vert\kern-0.25ex\left\vert #1 
    \right\vert\kern-0.25ex\right\vert\kern-0.25ex\right\vert}}
\newcommand{\norm}[1]{\left\Vert#1\right\Vert}
\newcommand{\set}[1]{\left\{\,#1\,\right\}}
\newcommand{\defined}[1]{\emph{#1}}
\newcommand{\mc}[1]{\mathcal{#1}}
\newcommand{\op}[1]{\operatorname{#1}}
\newcommand{\ignore}[1]{}
\renewcommand{\phi}{\varphi}
\newcommand{\cO}{\mathcal O}
\newcommand{\cZ}{\mathcal Z} 
\newcommand{\cR}{\mathcal R} 
\newcommand{\bbC}{\mathbb C} 
\newcommand{\bbN}{\mathbb N} 
\newcommand{\cW}{\mathcal W} 
\DeclareMathOperator{\Ad}{Ad}
\DeclareMathOperator{\SU}{SU}
\DeclareMathOperator{\Aut}{Aut}
\newcommand{\cK}{\mathcal{K}}
\begin{document}

\title[Fra\"iss\'e limits of C*-algebras]{Fra\"iss\'e limits of C*-algebras}%
\author[C.J. Eagle]{Christopher J. Eagle}
 
\address{Department of Mathematics\\ 
University of Toronto \\
40 St. George Street \\
Toronto, Ontario, Canada\\ 
M5S 2E4}
\email{eaglec@uvic.ca}
\curraddr{
Department of Mathematics and Statistics \\
University of Victoria\\
P.O. Box 1700 STN CSC \\
Victoria, British Columbia\\
Canada, V8W 2Y2
}
\urladdr{http://www.math.toronto.edu/cjeagle/}

\author[I. Farah]{Ilijas Farah}

\address{Department of Mathematics and Statistics\\
York University\\
4700 Keele Street\\
North York, Ontario\\ Canada, M3J
1P3\\
and Matematicki Institut, Kneza Mihaila 34, Belgrade, Serbia}

\email{ifarah@mathstat.yorku.ca}
\urladdr{http://www.math.yorku.ca/$\sim$ifarah}

\author[B. Hart]{Bradd Hart}
\address{Department of Mathematics and Statistics\\
McMaster University\\ 1280 Main Street\\ West Hamilton, Ontario\\
Canada L8S 4K1}
\email{hartb@mcmaster.ca}

\urladdr{http://www.math.mcmaster.ca/$\sim$bradd/}

  \author[B. Kadets]{Boris Kadets}
\address{School of Mathematics and Mechanical Engineering\\
   Kharkiv V.N. Karazin National University\\
   Kharkiv, Ukraine}
   \email{borja.kadets@gmail.com}
   
   \author[V. Kalashyk]{Vladyslav Kalashnyk}
   \address{Department of Mathematics\\
   340 Rowland Hall\\
University of California, Irvine\\
Irvine, CA 92697-3875}
   \email{vkalashn@uci.edu}

    \author[M. Lupini]{Martino Lupini}
    \address{Department of Mathematics and Statistics\\
York University\\
4700 Keele Street\\
North York, Ontario\\ Canada, M3J
1P3}
\email{lupini@caltech.edu}
\curraddr{Mathematics Department\\
California Institute of Technology\\
1200 E. California Blvd\\
MC 253-37 \\
Pasadena, CA 91125}
\urladdr{http://www.lupini.org}

\thanks{The genesis of this paper was a Fields Institute summer research project.  The authors would like to thank the Institute for an excellent research environment. We would also like to thank Aaron Tikuisis for helpful conversations and Shuhei Masumoto for several corrections.  Finally, we thank the referee for comments which led to numerous clarifications and improvements.}

\date{\today}%
\begin{abstract}
We realize the Jiang-Su algebra, all UHF
algebras, and the hyperfinite II$_{1}$ factor as Fra\"iss\'e limits of suitable
classes of structures. Moreover by means of Fra\"iss\'e theory we provide new
examples of AF algebras with strong homogeneity properties. As a consequence
of our analysis we deduce Ramsey-theoretic results about the class of
full-matrix algebras.   
\end{abstract}
\maketitle

\section{Introduction}
Fra\"iss\'e theory lies at the crossroads of
combinatorics and model theory. It originates from the seminal work of Fra\"iss\'e in \cite%
{fraisse_sur_1954} for the case of discrete countable structures. Broadly
speaking, Fra\"iss\'e theory studies the correspondence between homogeneous
structures and properties of the classes of their finitely generated
substructures. The \emph{age} of a countable structure is the collection of its
finitely generated substructures, and the ages of homogeneous structures are
precisely the classes of structures known as \emph{Fra\"iss\'e classes}.  Conversely, given any Fra\"iss\'e
class one can construct a countable homogeneous structure
with the given class as its age.  This structure, which is referred to as the \emph{Fra\"iss\'e limit} of the class, is unique up to isomorphism, and can be thought of as the structure generically constructed from the class.

Fra\"iss\'e theory has been recently generalized to  metric
structures by Ben Yaacov in \cite{BenYaacov2013}.  
   An earlier approach to Fra\"iss\'e limits in the metric setting was developed in~\cite{Schoretsanitis2007}. 
 Standard examples of metric Fra\"iss\'e limits are the Urysohn metric space, its variants, 
 and the Gurarij Banach space (previously construed as a Fra\"iss\'e limit in~\cite{kubis_proof_2013}). 
 The  Elliott intertwining argument central in classification program for nuclear C*-algebras (see \cite{rordam_classification_2002})
 is closely related to the proof of uniqueness of metric Fra\"iss\'e limits.

In this paper we study Fra\"iss\'e limits of C*-algebras. In particular we show
that several important C*-algebras can be described as Fra\"iss\'e limits of suitable
classes. As in \cite{Melleray2014}, we work under slightly less general
assumptions than \cite{BenYaacov2013}, and we consider only classes where the
interpretation of functional and relational symbols are Lipschitz (see Section \ref{Section:FraisseLimit} below for the precise definitions).  
In our constructions we consider Fra\"iss\'e classes that are not complete (in the sense of \cite{BenYaacov2013}) and
are not closed under substructures. The reason we do this is that the class of finitely generated
substructures of a given C*-algebra tends to be too large. 
As a matter of fact, conjecturally all simple and separable C*-algebras are singly generated 
(see \cite{Hannes2014}). 
 As a consequence we
only consider classes that are made of suitable \textquotedblleft
small\textquotedblright\ subalgebras of the given C*-algebra.

We show that the Jiang-Su algebra $\mathcal{Z}$ \cite{JiangSu} and 
all UHF algebras \cite{glimm_certain_1960} are limits of suitable
Fra\"iss\'e classes. Both $\mathcal{Z}$ and UHF algebras are examples of
C*-algebras of fundamental importance for the classification program of
C*-algebras, a survey of which can be found in 
\cite{rordam_classification_2002}, \cite{elliott_regularity_2008}. Furthermore we prove that, while the class of
finite-dimensional C*-algebras is not Fra\"iss\'e, one can obtain a
Fra\"iss\'e class by adding a distinguished interior trace and imposing a
restriction on the number of direct summands. This provides new 
examples of AF algebras satisfying strong homogeneity properties. Finally
we deduce a Ramsey-type result for the class of matrix algebras, either endowed with
 the operator norm or with the trace-norm. This is obtained from the 
 above mentioned description of (infinite type) UHF algebras as limits, together
with a similar characterization of the hyperfinite II$_{1}$ factor $\mathcal{%
R}$. We use the observation that the corresponding automorphism groups are extremely
amenable which is a result due to Gromov \cite{gromov_filling_1983}. The other ingredient is the well known connection between extreme amenability and Ramsey-theoretic
properties of a Fra\"iss\'e class originally established in \cite%
{kechris_fraisse_2005} and recently generalized to the metric setting in 
\cite{Melleray2014}.

The paper is divided into seven sections. In Section \ref{Section:FraisseLimit} we recall the basic notions and results of Fra\"iss\'e
theory, adapted to the framework of C*-algebras. Section~\ref{Section:AF}
contains the results about UHF algebras, AF algebras, and the hyperfinite II$_1$ factor. The description of
the Jiang-Su algebra as a Fra\"iss\'e limit is presented in Section~\ref{Section:JiangSu}. We recall the notions of L\'{e}vy groups
and extremely amenable groups in Section \ref{Section:Levy}, where we observe
that the automorphisms groups of the hyperfinite~II$_1$ factor and infinite type UHF
algebras are L\'{e}vy. This is used in Section \ref{Section:Ramsey} to deduce
Ramsey-type results about the class of full matrix algebras. We conclude in
Section~\ref{Section:future} with a discussion of future lines of research
and open problems.

\section{Fra\"iss\'e limits of C*-algebras}\label{Section:FraisseLimit}
In this section we define Fra\"iss\'e classes of C*-algebras and their Fra\"iss\'e limits.  Recall that a \emph{C*-algebra} is a subalgebra of the algebra of bounded linear operators on a Hilbert space which is closed under the adjoint operation * and is closed in the operator norm topology (see \cite{Davidson1996} for an introduction to C*-algebras).  We will often consider \emph{unital} C*-algebras, that is, algebras with a multiplicative identity element, but when we say ``C*-algebra" without qualification we mean an algebra which is not necessarily unital.  We will consider C*-algebras as examples of metric structures.  The literature contains several definitions of metric structures suited to various purposes; the one we present here is the same as in \cite{Melleray2014}.

\begin{defn}
A \defined{language} $L$ consists of a set of \emph{predicate symbols} and a set  of \emph{function symbols}.  Each predicate symbol $P$ and function symbol $f$ carries an associated \emph{arity} and \emph{Lipschitz constant} $C_P$ and $C_f$ respectively.  We assume that every language includes a distinguished symbol $d$, which will always be interpreted as a metric.

An \emph{$L$-structure} is a complete metric space $(A, d)$, together with interpretations for the symbols of $L$: for each
\begin{enumerate}
\item $n$-ary  predicate symbol $P$, a $C_P$-Lipschitz function $P^A : A^n \to \mathbb{R}$, and
\item each $n$-ary function symbol $f$, a $C_f$-Lipschitz function $f^A : A^n \to A$.
\end{enumerate}
\end{defn}

We need to say a word about how we will formally see C*-algebras as structures in the sense of the previous definition.  For a C*-algebra $A$, we will consider the unit ball $A_1$ together with the operator norm as the underlying complete metric space.  In terms of the language, for every *-polynomial $p(x_1,\ldots,x_n)$, there will be an $n$-ary predicate $R_p$ which is interpreted on $A_1^n$ by $\|p(x_1,\ldots,x_n)\|$.  This relation is Lipschitz with a constant that is independent of the choice of C*-algebra.  If we wish to consider a trace as well then we similarly introduce relations for traces of all *-polynomials on the unit ball; again, all of these relations are Lipschitz. In practice, we will use the usual C*-algebraic notation when we deal with C*-algebras but formally, for the purposes of fitting the continuous Fra\"iss\'e context, we will treat them as above.


Since all of our structures fit into the framework described above, we find it convenient to give a presentation of Fra\"iss\'e theory which is closer to that of \cite{Melleray2014} than the more general approach taken in \cite{BenYaacov2013}.  Our definitions are not identical to those of either \cite{Melleray2014} or \cite{BenYaacov2013}; see Remark \ref{rmk:DifferencesInDefinition} for discussion of the differences. In particular, \cite{Melleray2014} do not require their metric spaces to be bounded and therefore 
their structures are not metric structures in the sense of \cite{BYBHU}. 
In the way that we are viewing C*-algebras (as their unit balls) the underlying metric is bounded and therefore their continuous theory is well-defined.

\begin{defn}
Let $A$ be a C*-algebra, and $\overline{a}$ a tuple from $A_1$.  The \defined{subalgebra generated by $\overline{a}$} is the smallest C*-subalgebra of $A$ which contains $\overline{a}$, and is denoted~$\langle \overline{a} \rangle$.  We say $A$ is \defined{finitely generated} if there is a finite tuple $\overline{a}$ such that $A = \langle \overline{a} \rangle$.
\end{defn}

\begin{rmk}
The condition that a C*-algebra be finitely generated may be weaker than it appears.  It is known that a large class of separable unital C*-algebras, including all those which are $\mc{Z}$-stable, are generated by single elements (see \cite{Hannes2014} for this result and further discussion).  In particular, some of the C*-algebras we will construct as 
Fra\"iss\'e limits will be singly generated.  
\end{rmk}

\begin{defn}\label{Definition:Fraisse}
Let $\mc{K}$ be a class of finitely generated structures with distinguished generators.
\begin{enumerate}
\item{
We say that a structure is a \defined{$\mc{K}$-structure} if it is an inductive limit of elements of $\mc{K}$.
}
\item{
The class $\mc{K}$ has the \defined{near amalgamation property (NAP)} if whenever\\ $A, B_0, B_1 \in \mc{K}$, and $\phi_i : A \to B_i$ are morphisms, then for every $\epsilon > 0$ there is a $C\in \mc{K}$ and morphisms $\psi_i : B_i \to C$ such that $d(\psi_0\phi_0(\overline{a}), \psi_1\phi_1(\overline{a})) < \epsilon$, where $\overline{a}$ is the distinguished generating set of $A$.
}
\item{
The class $\mc{K}$ has the \defined{amalgamation property (AP)} if, in the definition of NAP, we may take $\epsilon = 0$.
}
\item{
The class $\mc{K}$ has the \defined{joint embedding property (JEP)} if for all $A, B \in \mc{K}$ there is $C \in \mc{K}$ such that $A$ and $B$ embed into $C$.
}
\end{enumerate}
\end{defn}

The properties defined above have clear analogues in classical Fra\"iss\'e theory.  
In the classical setting one works with \emph{countable} classes of finite structures, in order to ensure that the resulting limit object is also countable.  In the metric setting it is necessary to replace countability by separability in a suitably chosen topology, which we now describe.  As in \cite[Definition 2.10]{BenYaacov2013}, if $\mc{K}$ is a class of finitely generated structures, we denote by $\mc{K}_n$ the subclass of $\mc{K}$ consisting of all members of $\mc{K}$ whose distinguished generating sets have size $n$.  If $\mc{K}$ has JEP and NAP, we can define a pseudo-metric on $\mc{K}_n$ by defining
\[d^{\mc{K}}(\overline{a}, \overline{b}) = \inf\{d_C(\overline{a}, \overline{b}) : \overline{a}, \overline{b} \in C,  C \in \mc{K}\}\]
where $d_C$ is the distance computed in $C$ (see \cite[Definition 2.11]{BenYaacov2013}) and $\bar a$ and $\bar b$ are the distinguished generators of elements of $\mc{K}_n$.

\begin{defn}
A class $\mc{K}$ of finitely generated structures with JEP and NAP has the \defined{weak Polish Property (WPP)} if for each $n$ the pseudo-metric space $(\mc{K}_n, d^{\mc{K}})$ is separable.
\end{defn}

Finally, we come to the central definitions of Fra\"iss\'e classes and Fra\"iss\'e limits.

\begin{defn}
A class $\mc{K}$ of finitely generated structures is a \defined{Fra\"iss\'e class} if it satisfies JEP, NAP and WPP.

A $\mc{K}$-structure $M$ is a \defined{Fra\"iss\'e limit} of the Fra\"iss\'e class $\mc{K}$ if:
\begin{enumerate}
\item{
$M$ is \defined{$\mc{K}$-universal}: For every $A \in \mc{K}$ there is an embedding of $A$ into $M$,
}
\item{
$M$ is \defined{approximately $\mc{K}$-homogeneous}: for all $A, B \subseteq M$ such that $A \cong B$, $A, B \in \mc{K}$ and for every $\epsilon > 0$ there is an automorphism $\sigma$ of $M$ such that if $\bar a$ and $\bar b$ are the generators of $A$ and $B$ then $d(\bar a,\sigma(\bar b)) < \epsilon$.
}
\end{enumerate}
\end{defn}

\begin{rmk}\label{rmk:DifferencesInDefinition}
The classes that we are considering are \emph{incomplete} in the sense of \cite[Definition 2.12]{BenYaacov2013}.  The completions of our classes will include their Fra\"iss\'e limits.  The classes we consider also fail to be \emph{hereditary}, that is, we will have classes $\mc{K}$, and members $A \in \mc{K}$ with finitely generated substructures $B \subseteq A$ and $B \not\in \mc{K}$.  As a consequence, we do not have the usual correspondence between Fra\"iss\'e classes and ages of homogeneous structures.  Nevertheless, our definitions do allow us to construct limits of Fra\"iss\'e classes, and hence obtain interesting information about the limit objects. 
\end{rmk}

\begin{thm}\label{lem:FraisseLimitsExist}
Every Fra\"iss\'e class has a Fra\"iss\'e limit which is unique up to isomorphism.
\end{thm}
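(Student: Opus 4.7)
The plan is to follow the standard template of Fraïssé theory adapted to the bounded metric setting: establish existence by a chain construction in $\mathcal{K}$, and uniqueness by a back-and-forth argument driven by approximate $\mathcal{K}$-homogeneity. The crucial new feature compared to the discrete case is that amalgamation is only approximate, so tolerances must be chosen summably to ensure that the relevant sequences converge in operator norm.

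For existence, the idea is to build an increasing chain $A_0 \hookrightarrow A_1 \hookrightarrow A_2 \hookrightarrow \cdots$ in $\mathcal{K}$ and take $M$ to be the metric completion of $\bigcup_i A_i$. Using WPP, fix for each $n$ a countable dense subset $D_n$ of $(\mathcal{K}_n, d^{\mathcal{K}})$. Enumerate a list of ``amalgamation tasks'', each consisting of members $A, B \in \bigcup_n D_n$ with a morphism $A \to B$, a morphism $A \to A_i$ arising in the chain, and a rational $\epsilon > 0$; similarly list JEP tasks for universality. At stage $i$, apply NAP (respectively JEP) to pass to $A_{i+1} \in \mathcal{K}$ that solves the task scheduled at that stage to within tolerance $\epsilon_i$, where the $\epsilon_i$ are chosen so that $\sum_i \epsilon_i < \infty$. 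A standard bookkeeping enumeration guarantees that every task is eventually addressed. Then $M$ is a $\mathcal{K}$-structure by construction, it is $\mathcal{K}$-universal because every $A \in \mathcal{K}$ is $d^{\mathcal{K}}$-approximated by some element of $D_n$ that gets jointly embedded into the chain, and it is approximately $\mathcal{K}$-homogeneous because the scheduled NAP tasks yield, in the limit, automorphisms of $M$ witnessing homogeneity.

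For uniqueness, suppose $M$ and $N$ are both Fraïssé limits of $\mathcal{K}$. Fix dense sequences $(m_k)$ in $M_1$ and $(n_k)$ in $N_1$ and a summable tolerance sequence $\epsilon_k \downarrow 0$. Construct by back-and-forth finite tuples $\bar{a}_k \subseteq M$ and $\bar{b}_k \subseteq N$ with $\langle \bar{a}_k \rangle, \langle \bar{b}_k \rangle \in \mathcal{K}$, together with isomorphisms $\phi_k : \langle \bar{a}_k \rangle \to \langle \bar{b}_k \rangle$ sending $\bar{a}_k$ to $\bar{b}_k$, such that $\phi_{k+1}$ extends an $\epsilon_k$-perturbation of $\phi_k$. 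At odd stages, adjoin $m_k$ on the $M$-side, use $\mathcal{K}$-universality of $N$ to embed $\langle \bar{a}_k, m_k \rangle$ into $N$, and invoke approximate $\mathcal{K}$-homogeneity of $N$ to align the new image with the existing $\bar{b}_k$ up to $\epsilon_k$; at even stages reverse the roles. Summability of $(\epsilon_k)$ makes the $\phi_k$ a Cauchy sequence of isometric embeddings on the dense set $\{m_k\}$, so they converge to an isometric $*$-homomorphism $M \to N$; symmetry of the construction ensures this map is an isomorphism.

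The main obstacle is the approximate nature of both NAP and homogeneity. Unlike in classical Fraïssé theory, one never gets exact commutativity of the amalgamation diagram, so the errors introduced at each stage of the chain construction or back-and-forth must not accumulate uncontrollably. This is handled by the summable-tolerance bookkeeping described above, which is essentially the metric analogue of the Elliott intertwining argument flagged in the introduction; the same device also forces the limit morphisms to be well-defined on the completion rather than only on the dense chain.
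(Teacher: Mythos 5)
Your overall strategy --- a generic chain with summable tolerances for existence, and an approximate back-and-forth (Elliott-intertwining style) for uniqueness --- is exactly the approach the paper invokes: it gives no proof of its own and instead defers to straightforward adaptations of \cite[Lemma 2.17, Theorems 2.19 and 2.21]{BenYaacov2013}, which follow this template. However, one step of your back-and-forth would fail for the classes this paper actually uses. You adjoin $m_k$ to $\bar a_k$ and ``use $\mathcal{K}$-universality of $N$ to embed $\langle \bar a_k, m_k\rangle$ into $N$.'' This presumes $\langle \bar a_k, m_k\rangle \in \mathcal{K}$, which holds automatically only for hereditary classes. Remark \ref{rmk:DifferencesInDefinition} stresses that the classes here are \emph{not} hereditary: for instance, if $\mathcal{K}$ is the class of full matrix algebras, the subalgebra of $M$ generated by $\bar a_k$ together with a generic element $m_k$ of the unit ball is essentially never a matrix algebra. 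Consequently neither $\mathcal{K}$-universality nor approximate $\mathcal{K}$-homogeneity --- both of which are stated only for members of $\mathcal{K}$ --- applies to $\langle \bar a_k, m_k\rangle$; this is precisely why the paper must say ``approximately $\mathcal{K}$-homogeneous'' rather than ``approximately homogeneous.''

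The repair is standard but needs to be said explicitly: instead of adjoining $m_k$ itself, use the fact that $M$ is a $\mathcal{K}$-structure (and, in the situations at hand, an inductive limit of a chain from $\mathcal{K}$) to choose $C \in \mathcal{K}$ with $C \subseteq M$ such that both the tuple $\bar a_k$ and the point $m_k$ lie within $\epsilon_k$ of $C$, and take $\bar a_{k+1}$ to be the distinguished generators of $C$. The old tuple is then only $\epsilon_k$-close to the new structure rather than contained in it, which introduces a further perturbation of $\phi_k$, absorbed by the same summability bookkeeping you already set up. A smaller point of the same flavour occurs in your existence argument: the countable task list must range over a countable dense family of \emph{morphisms} $A \to B$, not merely of objects; separability of the space of embeddings (a subset of $B_1^{n}$ where $n$ is the size of the generating tuple) supplies this. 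With these amendments your argument is the intended one.
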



The proof is a straightforward adaptation of the proofs of Lemma 2.17 and Theorem 2.19 from \cite{BenYaacov2013}.

In the discrete setting many (though not all) well-known Fra\"iss\'e limits have theories with quantifier elimination.  The main results of \cite{EagleFarahKirchbergVignati} show that quantifier elimination is a rare phenomenon for C*-algebras; in particular, it is shown in \cite{EagleFarahKirchbergVignati} that the only noncommutative C*-algebra with quantifier elimination is $M_2(\mathbb{C})$, so none of the noncommutative C*-algebras we construct as Fra\"iss\'e limits in the subsequent sections have quantifier elimination.  In Section 3.2 we show that the hyperfinite II$_{1}$ factor $\mc{R}$ is the Fra\"iss\'e limit of matrix algebras viewed as von Neumann algebras.  The theory of $\mc{R}$ also does not have quantifier elimination, as shown in \cite{HartGoldbringSinclair}.

We do have one example of a C*-algebra which is a Fra\"iss\'e limit whose theory has quantifier elimination, namely the algebra $C(2^{\mathbb{N}})$ of continuous functions on the Cantor set.  It is straightforward to see that this algebra is the Fra\"iss\'e limit of the class of finite-dimensional commutative C*-algebras (i.e., the algebras of the form $\mathbb{C}^n$).  Quantifier elimination for the theory of $C(2^{\mathbb{N}})$ is proved in \cite{EagleVignati}.  In fact, by the results of \cite{EagleGoldbringVignati}, the theory of $C(2^{\mathbb{N}})$ is the only theory of infinite-dimensional commutative unital C*-algebras which has quantifier elimination.

\section{AF algebras}\label{Section:AF}
We now turn to describing several examples of Fra\"iss\'e classes of finite-dimensional C*-algebras.  Throughout this section, when we discuss $M_n(\mathbb{C})$ we are considering it as being $n^2$-generated by the standard matrix units.  Recall that a \emph{(normalized) trace} on a unital C*-algebra $A$ is a continuous linear functional $\tau : A \to \mathbb{C}$ such that $\tau(1) = 1$, it is positive (i.e., $\tau(a^*a) \geq 0$ for all $a \in A$), and $\tau(ab) = \tau(ba)$ for all $a, b \in A$.  The space of traces of $A$, $T(A)$, is a weak*-compact and convex
subset of  the unit ball of the dual of $A$. Every unital *-homomorphism between tracial algebras 
$\phi\colon A\to B$ gives rise to the 
continuous affine map $\phi_*\colon T(B)\to T(A)$:  if $\tau \in T(B)$ and $a \in A$, define
\[
\phi_*(\tau)(a)=\tau(\phi(a)). 
\]
This contravariant functor will also play a role  in the proof of Lemma~\ref{lem:JiangSuProperties}.
It is a well-known fact from linear algebra that each matrix algebra $M_n(\mathbb{C})$ has a unique trace, and that trace $\tau$ is given by $\tau([a_{i, j}]) = \frac{1}{n}\sum_{j=1}^n a_{j, j}$. We will make frequent and unmentioned use of the following well-known properties of finite-dimensional C*-algebras. 
\begin{fact}
\begin{enumerate}
\item{
Every finite-dimensional C*-algebra is isomorphic to a finite direct sum of matrix algebras.
}
\item{
If $A = M_{k_1}(\mathbb{C}) \oplus \cdots \oplus M_{k_n}(\mathbb{C})$, then every trace on $A$ is a convex combination of the (unique) traces on $M_{k_1}(\mathbb{C}), \ldots, M_{k_n}(\mathbb{C})$.
}
\item{
There is a unital embedding of $M_n(\mathbb{C})$ into $M_m(\mathbb{C})$ if and only if $n$ divides $m$.  A unital embedding of finite-dimensional algebras $A$ and $B$ is characterized up to unitary conjugacy by the multiplicities with which it maps each direct summand of $A$ into each direct summand of $B$ (that is, by its Bratteli diagram; see \cite[Section III.2]{Davidson1996} or \cite[Section 4.4]{Farah2013a}).
}
\end{enumerate}
\end{fact}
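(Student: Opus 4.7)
The plan is to prove the three parts independently; part (1) is the Artin--Wedderburn theorem specialized to C*-algebras and carries the bulk of the work, while (2) is an elementary linear-algebra computation with traces and (3) is a Murray--von Neumann rank argument.

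For (1), I would reduce to the simple case via the center. The center $Z(A)$ is a finite-dimensional commutative C*-algebra, hence (by spectral diagonalization) isomorphic to $\bbC^n$ for some $n$, with minimal projections $p_1,\ldots,p_n$ summing to $1$. Then $A$ decomposes as $\bigoplus_{i=1}^n p_i A$, and each $p_i A$ is a finite-dimensional C*-algebra with trivial center, so it suffices to show such an algebra is a matrix algebra. For that, I would first observe that every finite-dimensional C*-algebra is semisimple: the Jacobson radical is a nilpotent ideal, but the C*-identity $\|x^*x\|=\|x\|^2$ forces $\|x^{2^n}\|=\|x\|^{2^n}$ for selfadjoint $x$, precluding nonzero nilpotent selfadjoint elements and hence nilpotent ideals. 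Thus a finite-dimensional C*-algebra with trivial center is simple, and it admits a faithful irreducible *-representation $\pi\colon B\to \op{B}(H)$ on some finite-dimensional $H$; Burnside's theorem (equivalently, the double commutant theorem together with the fact that irreducibility gives commutant $\bbC\cdot 1_H$) forces $\pi(B)=\op{B}(H)\cong M_k(\bbC)$. The main obstacle is the semisimplicity step, but the C*-identity handles it cleanly.

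For (2), the plan is to restrict. If $\tau$ is a trace on $A=\bigoplus M_{k_i}(\bbC)$, then $\tau$ restricts to a positive tracial functional $\tau_i$ on each summand. The key fact is that every such functional on $M_k(\bbC)$ is a scalar multiple of the normalized trace $\op{tr}_k$: using matrix units, for any fixed $l$ one has $\tau(e_{ij})=\tau(e_{il}e_{lj})=\tau(e_{lj}e_{il})=\delta_{ij}\tau(e_{ll})$, and $\tau(e_{ii})=\tau(e_{ij}e_{ji})=\tau(e_{ji}e_{ij})=\tau(e_{jj})$, so $\tau$ is determined by a single scalar. Writing $\tau_i=\lambda_i\op{tr}_{k_i}$ with $\lambda_i\geq 0$ (from positivity on $e_{11}$), the normalization $\tau(1_A)=\sum_i\lambda_i=1$ exhibits $\tau$ as a convex combination of the normalized traces.

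For (3), the divisibility claim is a rank argument: if $\phi\colon M_n(\bbC)\to M_m(\bbC)$ is a unital *-homomorphism, then $\phi(e_{11}),\ldots,\phi(e_{nn})$ are pairwise orthogonal projections summing to $1_m$, mutually Murray--von Neumann equivalent via the partial isometries $\phi(e_{ij})$, so they share a common rank $r$, forcing $m=nr$ and $n\mid m$. Conversely, when $m=nr$, the map $a\mapsto a\otimes 1_r$ under the identification $M_m(\bbC)\cong M_n(\bbC)\otimes M_r(\bbC)$ is a unital embedding. For the classification up to unitary conjugacy of a unital embedding $A\to B$, decompose $A=\bigoplus M_{k_i}(\bbC)$ and $B=\bigoplus M_{l_j}(\bbC)$ and apply the matrix case block by block: each summand $M_{k_i}$ maps into each $M_{l_j}$ with some multiplicity $m_{ij}$, and two embeddings with the same multiplicities are conjugate by a block-diagonal unitary obtained by aligning their matrix-unit systems, since projections of equal rank in each $M_{l_j}$ are unitarily equivalent. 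The main obstacle is the bookkeeping for the multi-summand case, but it reduces cleanly to the single matrix-algebra computation once the multiplicities are read off as ranks of projections.
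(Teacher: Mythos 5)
The paper offers no proof of this Fact at all --- it is stated as well-known background, with \cite{Davidson1996} and \cite{Farah2013a} cited only for part (3) --- so there is no internal argument to compare yours against. Your proofs are correct and are essentially the standard textbook route those references take: for (1), central decomposition plus semisimplicity (with the C*-identity ruling out nilpotent self-adjoint elements in the radical) and an irreducible representation to land in a full matrix algebra; for (2), the matrix-unit computation showing a tracial positive functional on $M_k(\mathbb{C})$ is a nonnegative multiple of the normalized trace; and for (3), the Murray--von Neumann rank argument identifying multiplicities and the alignment of matrix-unit systems for uniqueness up to unitary conjugacy.
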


When we consider a finite-dimensional algebra $M_{k_1}(\mathbb{C}) \oplus \cdots \oplus M_{k_n}(\mathbb{C})$, we always consider it as being generated by elements of the form $a_1 \oplus \cdots \oplus a_n$, where the $a_i$'s vary over the distinguished generators of the $M_{k_i}(\mathbb{C})$'s.

We begin by observing that when we consider classes of finite-dimensional C*-algebras near amalgamation can be replaced by actual amalgamation.

\begin{lem}
Let $\mc{K}$ be a subclass of the class of finite-dimensional C*-algebras.  The following are equivalent:
\begin{enumerate}
\item{
$\mc{K}$ has NAP,
}
\item{
$\mc{K}$ has AP.
}
\end{enumerate}
\end{lem}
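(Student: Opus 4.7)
The direction (2)$\Rightarrow$(1) is immediate from the definitions. For (1)$\Rightarrow$(2), my plan is to exploit the rigidity of *-homomorphisms between finite-dimensional C*-algebras: the fact that such maps are classified up to unitary conjugacy by the discrete data of a Bratteli diagram means that two *-homomorphisms from a fixed $A$ which agree closely on the distinguished generators must already be unitarily conjugate inside the target.

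The key lemma I would first establish is the following perturbation statement: for each finite-dimensional $A = M_{k_1}(\mathbb{C}) \oplus \cdots \oplus M_{k_r}(\mathbb{C})$ with distinguished generators $\bar a$, there is a constant $\delta = \delta(A) > 0$ such that, for any finite-dimensional C*-algebra $C$ and any pair of morphisms $\rho, \sigma : A \to C$ with $d(\rho(\bar a), \sigma(\bar a)) < \delta$, there exists a unitary $u$ in $C$ (or in its unitization) satisfying $\sigma = \operatorname{Ad}(u)\circ \rho$. To see this, observe that the central projections $e_1, \ldots, e_r$ (the units of the matrix summands of $A$) are fixed *-polynomials in $\bar a$, so for $\delta$ small enough the projections $\rho(e_i)$ and $\sigma(e_i)$ are at norm distance strictly less than $1$ in $C$. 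Two projections in a C*-algebra at norm distance less than $1$ are unitarily equivalent, so the rank of $\rho(e_i)$ must equal the rank of $\sigma(e_i)$ in every direct summand of $C$. These ranks encode the multiplicities $m_{ij}$ appearing in the Bratteli diagrams of $\rho$ and $\sigma$, so the two maps have the same Bratteli diagram and are unitarily conjugate in $C$. Crucially, $\delta(A)$ depends only on $A$, not on $C$.

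Given the lemma, AP follows immediately. For $\phi_i : A \to B_i$ in $\mc{K}$, apply NAP with $\epsilon := \delta(A)$ to obtain $C \in \mc{K}$ and morphisms $\psi_i : B_i \to C$ with $d(\psi_0 \phi_0(\bar a), \psi_1 \phi_1(\bar a)) < \delta(A)$. Applying the perturbation lemma to $\rho := \psi_0 \phi_0$ and $\sigma := \psi_1 \phi_1$ produces a unitary $u \in C$ satisfying $u \psi_0 \phi_0(a) u^* = \psi_1 \phi_1(a)$ for every $a \in A$. Replacing $\psi_0$ by $\tilde\psi_0 := \operatorname{Ad}(u)\circ \psi_0$, which is still a morphism from $B_0$ to $C$, one obtains the exact amalgamation $\tilde\psi_0 \phi_0 = \psi_1 \phi_1$ inside $C \in \mc{K}$.

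The only delicate point is the uniformity of $\delta(A)$ across all possible targets $C$. This works precisely because the rank data $(m_{ij})$ is a discrete invariant: once two projections are sufficiently close in operator norm (any distance less than $1$ suffices), they have the same rank in every matrix summand of the target, no matter how large those summands are. Thus the argument does not require any control on the size or structure of $C$ beyond the fact that it is finite-dimensional.
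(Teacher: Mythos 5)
Your proposal is correct and follows essentially the same route as the paper: apply NAP with a sufficiently small $\epsilon$, observe that closeness of the two composite morphisms on the central projections forces their Bratteli diagrams to coincide (since projections at norm distance less than $1$ have equal rank), conclude unitary conjugacy, and absorb the unitary into one of the maps to get exact amalgamation. Your version is slightly more careful in making the tolerance $\delta(A)$ explicit as a function of the generators, whereas the paper works directly with the tuple of central summand units and $\epsilon = \tfrac{1}{2}$, but the underlying argument is identical.
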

\begin{proof}
The direction (2) $\implies$ (1) is obvious.  For the other direction, suppose that $\mc{K}$ has NAP.  Take $A, B_1, B_2 \in \mc{K}$, and let $\phi_i : A \to B_i$ be morphisms.  Write $A = M_{n_1}(\mathbb{C}) \oplus \cdots \oplus M_{n_k}(\mathbb{C})$.  Let 
$$\overline{a} = ((I_{n_1}, 0, \ldots, 0), (0, I_{n_2}, 0, \ldots, 0), \ldots, (0, \ldots, 0, I_{n_k}))$$  By definition of NAP, with $\epsilon = \frac{1}{2}$, there is a $\mc{K}$-structure $C$, and maps $\psi_i : B_i \to C$ such that $d(\psi_0\phi_0(\overline{a}), \psi_1\phi_1(\overline{a})) < \epsilon$.  

We claim that $C$ satisfies the definition of AP.  Consider the Bratteli diagrams of the embeddings $\psi_0\phi_0$ and $\psi_1\phi_1$ of $A$ into $C$.  If these Bratteli diagrams are the same, then after conjugating by a unitary we have $\psi_0\phi_0 = \psi_1\phi_1$, so $C$ exactly amalgamates $B_0$ and $B_1$ over $A$.  If the Bratteli diagrams are not the same, then for some $i$ the ranks of the matrices $\psi_0\phi_0(0, \ldots, I_{n_i}, \ldots, 0))$ and $\psi_1\phi_1(0, \ldots, I_{n_i}, \ldots, 0)$ are not equal.  These images are then projections of different ranks, so
\[\norm{\psi_0\phi_0(0, \ldots, I_{n_i}, \ldots, 0)) - \psi_1\phi_1(0, \ldots, I_{n_i}, \ldots, 0)} = 1,\]
which contradicts our choice of $C$.
\end{proof}

In the setting of Banach spaces, the class of all finite-dimensional Banach spaces is a Fra\"iss\'e class, with the Gurarij space as its limit (see \cite[Section 3.3]{BenYaacov2013}).  By contrast, the class of all finite-dimensional C*-algebras is \emph{not} a Fra\"iss\'e class.  The obstacle to amalgamation comes from considering traces.

\begin{prop}\label{prop:AllFDNotFraisse}
The class of finite-dimensional C*-algebras is not a Fra\"iss\'e class.
\end{prop}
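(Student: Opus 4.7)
The plan is to exhibit two morphisms in $\mathcal{K}$ that cannot be near-amalgamated, with the obstruction coming from the fact that matrix algebras carry unique tracial states whose pullbacks to a common subalgebra can disagree.  Concretely I would take $A=\mathbb{C}\oplus\mathbb{C}$ with distinguished generators $p_1=(1,0)$ and $p_2=(0,1)$, set $B_0=M_2(\mathbb{C})$ and $B_1=M_3(\mathbb{C})$, and let $\phi_0\colon A\to B_0$ and $\phi_1\colon A\to B_1$ be any unital embeddings that send $p_1$ to a minimal projection.  Denoting by $\tau_2$ and $\tau_3$ the unique tracial states on $B_0$ and $B_1$, one then has $\tau_2(\phi_0(p_1))=1/2$ and $\tau_3(\phi_1(p_1))=1/3$.

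Next I would derive the contradiction.  Suppose some $C\in\mathcal{K}$ together with morphisms $\psi_i\colon B_i\to C$ achieved $d\bigl(\psi_0\phi_0(\overline{a}),\psi_1\phi_1(\overline{a})\bigr)<\tfrac{1}{6}$.  Every finite-dimensional C*-algebra admits at least one tracial state, so pick such a state $\tau$ on $C$; since $B_0$ and $B_1$ have unique tracial states, one must have $\tau\circ\psi_0=\tau_2$ and $\tau\circ\psi_1=\tau_3$.  Because states are norm-contractive,
\[
\tfrac{1}{6}=\bigl|\tau(\psi_0\phi_0(p_1))-\tau(\psi_1\phi_1(p_1))\bigr|\le\bigl\|\psi_0\phi_0(p_1)-\psi_1\phi_1(p_1)\bigr\|\le d\bigl(\psi_0\phi_0(\overline{a}),\psi_1\phi_1(\overline{a})\bigr)<\tfrac{1}{6},
\]
a contradiction.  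Hence NAP fails (already for $\epsilon=1/6$), so $\mathcal{K}$ is not a Fra\"iss\'e class.

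I do not anticipate any serious obstacle; the only ingredients are that every finite-dimensional C*-algebra has a tracial state, that $M_n(\mathbb{C})$ has a unique such state, and that states are norm-contractive.  The same failure admits a purely norm-theoretic derivation: any finite-dimensional amalgam $C$ must have each simple summand of dimension divisible by $\operatorname{lcm}(2,3)=6$, which forces $\psi_0\phi_0(p_1)$ and $\psi_1\phi_1(p_1)$ to be projections of different ranks in each summand and hence at norm distance exactly $1$.  I would nonetheless present the tracial version, both because the quantitative bound is sharper for the intended purpose and because it isolates the phenomenon that motivates enriching the language with a distinguished trace in the subsequent subsection.
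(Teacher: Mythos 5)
Your proof is correct and uses essentially the same counterexample as the paper: $\mathbb{C}\oplus\mathbb{C}$ embedded unitally into $M_2(\mathbb{C})$ and $M_3(\mathbb{C})$, with the uniqueness of the tracial states on matrix algebras forcing the incompatible values $1/2$ and $1/3$. The only difference is that your quantitative version refutes NAP directly with $\epsilon=1/6$, whereas the paper refutes exact AP and then invokes its preceding lemma that NAP and AP coincide for subclasses of finite-dimensional C*-algebras; both routes are fine.
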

\begin{proof}
We show that this class does not have AP.  Let $A = \mathbb{C} \oplus \mathbb{C}$, $B = M_2(\mathbb{C})$, and $C = M_3(\mathbb{C})$.  Consider the following embeddings $\iota_{A, C} : A \to C$ and $\iota_{B, C} : B \to C$:
\[\iota_{A, C}(a, b) = \begin{bmatrix}a & 0 \\ 0 & b\end{bmatrix} \qquad \qquad \iota_{B, C}(a, b) = \begin{bmatrix}a & 0 & 0 \\ 0 & b & 0 \\ 0 & 0 & b\end{bmatrix}.\]  
Suppose that $D$ is a finite-dimensional C*-algebra which amalgamates $B$ and $C$ over $A$ with respect to these embeddings, via embeddings $\iota_{B, D}$ and $\iota_{C, D}$.  Let $x = \iota_{B, D} \circ \iota_{A, B}(1, 0)$, and note that $x = \iota_{C, D}\circ \iota_{A, C}(1, 0)$ by definition of amalgamation.  

Let $\tau_D$ be a trace on $D$.  On the image of $B$ in $D$ the trace $\tau_D$ restricts to a trace, which must be the unique trace $\tau_B$ from $B$.  Therefore,
\[\tau_D(x) = \tau_B(\iota_{A, B}(1, 0)) = \tau_B\left(\begin{bmatrix}1 & 0 \\ 0 & 0\end{bmatrix}\right) = \frac{1}{2}.\]
Similarly, $\tau_D$ restricts to the unique trace $\tau_C$ on the image of $C$ in $D$.  Then we have
\[\tau_D(x) = \tau_C(\iota_{A, C}(1, 0)) = \tau_C\left(\begin{bmatrix}1 & 0 & 0 \\ 0 & 0 & 0 \\ 0 & 0 & 0 \end{bmatrix}\right) = \frac{1}{3}.\]
This contradiction finishes the proof.  
\end{proof}

\subsection{UHF algebras}
If we restrict our attention to subclasses of the class of matrix algebras, we can obtain UHF algebras as Fra\"iss\'e limits.  Recall that a separable unital C*-algebra which arises as the direct limit of unital embeddings of matrix algebras is called a \defined{uniformly hyperfinite (UHF)} algebra.  It is well-known that UHF algebras are classified by \emph{supernatural numbers}, that is, formal products $\prod_{p \text{ prime}}p^{n_p}$, where each $n_p \in \mathbb{N} \cup \set{\infty}$; given a UHF algebra $A$, which is the direct limit of $M_{k_1}(\mathbb{C})\to M_{k_2}(\mathbb{C}) \to \cdots$, the associated supernatural number is given by $n_p = \sup\set{r : p^r \mid k_i \text{ for some $i$}}$.  See \cite[Chapter 4]{Farah2013a} for more details.

\begin{thm}\label{thm:UHF}
Every UHF algebra is a Fra\"iss\'e limit.
\end{thm}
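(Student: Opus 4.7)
For a UHF algebra $A$ with supernatural number $\mathfrak{n}=\prod p^{n_p}$, the plan is to exhibit $A$ as the Fra\"iss\'e limit of the class
\[
\mc{K}_A = \{\,M_k(\mathbb{C}) : k \mid \mathfrak{n}\,\},
\]
each $M_k(\mathbb{C})$ being generated by its standard system of $k^2$ matrix units and morphisms being unital *-homomorphisms. I would split the argument into checking the Fra\"iss\'e-class axioms for $\mc{K}_A$ and then checking that $A$ itself is $\mc{K}_A$-universal and approximately $\mc{K}_A$-homogeneous, so that Theorem \ref{thm:FraisseClassEquivalence} together with uniqueness (Theorem \ref{lem:FraisseLimitsExist}) identifies the limit with $A$.

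JEP is immediate: $M_k$ and $M_\ell$ embed unitally into $M_{\mathrm{lcm}(k,\ell)}$, and $\mathrm{lcm}(k,\ell) \mid \mathfrak{n}$. By the previous lemma it is enough to prove AP. Given unital embeddings $\phi_i : M_k \to M_{\ell_i}$ with $\ell_i \mid \mathfrak{n}$, set $N = \mathrm{lcm}(\ell_0,\ell_1)$, which still divides $\mathfrak{n}$. Using the classical fact that a unital embedding of matrix algebras is determined up to unitary conjugation by its multiplicity, I can pick unital embeddings $\psi_i : M_{\ell_i} \to M_N$ so that both compositions $\psi_i \circ \phi_i : M_k \to M_N$ realize the unique (up to unitary conjugacy) unital embedding of multiplicity $N/k$; after conjugating one of them by a suitable unitary in $M_N$, the compositions agree on the nose, giving AP. WPP is trivial since the distinguished generating set of $M_k$ has size $k^2$, so $\mc{K}_{A,n}$ has at most one element (namely $M_{\sqrt{n}}$ when $\sqrt{n}$ is an integer dividing $\mathfrak{n}$).

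Universality of $A$ follows from the standard theory of UHF algebras: every $M_k$ with $k \mid \mathfrak{n}$ embeds unitally into $A$. For approximate $\mc{K}_A$-homogeneity, let $B_0,B_1 \subseteq A$ be two copies of $M_k$ with distinguished generators $\bar a, \bar b$. Write $A$ as the closure of an increasing union $\bigcup_j M_{k_j}$ with $k_j \mid \mathfrak{n}$. For $j$ large enough, standard perturbation of matrix units in AF algebras (see \cite[Section III.2]{Davidson1996}) yields systems of matrix units inside $M_{k_j}$ that are within $\epsilon/3$ of $\bar a$ and $\bar b$; these generate copies $B_0', B_1'$ of $M_k$ inside $M_{k_j}$. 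Any two unital copies of $M_k$ inside $M_{k_j}$ are unitarily conjugate there, so there is $u \in M_{k_j}$ with $uB_0'u^* = B_1'$ matching generators, and $\mathrm{Ad}(u)$ extends to an inner automorphism $\sigma$ of $A$. Chaining the approximations gives $d(\bar a, \sigma(\bar b)) < \epsilon$.

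The only non-formal step is the homogeneity argument, which rests on the perturbation result for matrix units in AF algebras; amalgamation, by contrast, reduces to the multiplicity classification of embeddings of matrix algebras, and WPP is essentially vacuous because each $\mc{K}_{A,n}$ is either empty or a singleton. With universality, approximate homogeneity, and the fact that $A$ is itself a $\mc{K}_A$-structure (being the closure of an increasing union of matrix subalgebras whose dimensions divide $\mathfrak{n}$), Theorem \ref{thm:FraisseClassEquivalence} shows that $\mc{K}_A$ is a Fra\"iss\'e class and Theorem \ref{lem:FraisseLimitsExist} yields that its Fra\"iss\'e limit is $A$.
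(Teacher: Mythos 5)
Your proposal is correct, and the verification that the class is Fra\"iss\'e (AP from the uniqueness of unital embeddings of matrix algebras up to unitary conjugacy, JEP from a common upper bound, WPP because each $\mc{K}_n$ is at most a singleton) is essentially the argument in the paper. Where you genuinely diverge is in the identification of the limit with $A$. The paper takes $\mc{K}$ to be only the matrix algebras occurring in a fixed chain presenting $A$ (so that any two members are comparable and the larger one already serves as the amalgam), builds the limit abstractly via Theorem \ref{lem:FraisseLimitsExist}, and then identifies it with $A$ by appealing to Glimm's classification of UHF algebras by supernatural numbers. You instead take the saturated class of all $M_k$ with $k\mid\mathfrak{n}$ (which forces you to use $\mathrm{lcm}$'s in AP, since two members need not be comparable), and then verify directly that $A$ is $\mc{K}_A$-universal and approximately $\mc{K}_A$-homogeneous, invoking Theorem \ref{thm:FraisseClassEquivalence} and uniqueness. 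Your route is more self-contained within the Fra\"iss\'e framework and does not use the classification theorem as a black box, but it pays for this by requiring the perturbation lemma for systems of matrix units in AF algebras --- which is, of course, the engine of Glimm's theorem in the first place --- together with the observation that the perturbed copies remain unital (the sum of the perturbed diagonal matrix units is a projection at distance less than $1$ from the identity, hence equal to it), so that the two copies become unitarily conjugate inside a single $M_{k_j}$. Both arguments are sound; the paper's is shorter, yours makes the homogeneity of $A$ explicit rather than inferred from uniqueness of the limit.
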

\begin{proof}
Let $A$ be a UHF algebra, and write $A$ as the direct limit of matrix algebras $M_{n_1}(\mathbb{C}), M_{n_2}(\mathbb{C}), \ldots$.  As usual, we view each $M_{n_i}(\mathbb{C})$ with its standard matrix units as generators.  Let $\mc{K} = \set{M_{n_i}(\mathbb{C}) : i \in \mathbb{N}}$.  We then have
\[\mc{K}_n = \begin{cases}\{M_{n_i}(\mathbb{C})\} &\text{ if $n = n_i^2$ for some $i$} \\ \emptyset &\text{ if $n \neq n_i^2$ for all $i$}\end{cases}.\]
In particular, it is clear that WPP holds.  The class $\mc{K}$ has a minimal element $M_{n_1}(\mathbb{C})$, so JEP will be a consequence of AP.  To see AP, note that if $M_{n_i}(\mathbb{C})$ is embedded in $M_{n_j}(\mathbb{C})$ and $M_{n_k}(\mathbb{C})$, and (without loss of generality) $n_j \leq n_k$, then $M_{n_j}(\mathbb{C})$ embeds in $M_{n_k}(\mathbb{C})$ in a way which (up to unitary equivalence) respects the embedding of $M_{n_i}(\mathbb{C})$.  Therefore $M_{n_k}(\mathbb{C})$ itself serves to amalgamate $M_{n_j}(\mathbb{C})$ and $M_{n_k}(\mathbb{C})$ over $M_{n_i}(\mathbb{C})$.

It is clear from the construction of the Fra\"iss\'e limit of $\mc{K}$ that this limit is a UHF algebra with the same supernatural number as $A$, and hence is isomorphic to~$A$.
\end{proof}

An argument similar to the one in Theorem \ref{thm:UHF} shows that the class of full matrix algebras with injective (not necessarily unital) *-homomorphisms as morphisms is a Fra\"iss\'e class. The corresponding limit is the (non-unital) C*-algebra  of compact operators on the separable infinite-dimensional Hilbert space \cite[Section I.8]{blackadar_operator_2006}.

\subsection{The hyperfinite II$_1$ factor}
In Theorem \ref{thm:UHF} matrix algebras were regarded as finite-dimensional C*-algebras, but we can also regard them as tracial von Neumann algebras. A \emph{tracial von Neumann} algebra is a unital C*-algebra $M$ endowed with a distinguished trace $\tau$ such that the unit
ball of $M$ is complete with respect to the trace-norm 
$\|x\| _{\tau }=\tau \left( x^{\ast }x\right) ^{\frac{1}{2}}$. As was shown in \cite{Farah2014a}, tracial
von Neumann algebras can be regarded as metric structures in the language of
unital C*-algebras with  the additional predicate symbol for a distinguished trace, where the symbol for the metric is interpreted as the distance associated with the trace-norm. Since the operator norm is not uniformly continuous with respect to the trace-norm, it is 
no longer  part of the structure. A
tracial von Neumann algebra is \emph{separable }if it is separable with
respect to the trace-norm.

A \emph{finite factor }is a tracial von Neumann algebra $\left( M,\tau
\right) $ such that the center of $M$ consists only of the scalar multiples
of the identity. When $M$ is a finite factor the trace $\tau $ on $M$ is uniquely
determined. Full matrix algebras are examples of finite factors. A finite
factor that is not isomorphic to a full matrix algebra is called a II$_{1}$
factor. Equivalently, a finite factor is a II$_{1}$ factor when the trace
assumes all values in $[0, 1]$ on projections.

A II$_{1}$ factor is \emph{hyperfinite }if it can be locally approximated
(in trace-norm) by full matrix algebras. 
The unique hyperfinite II$_{1}$ factor is traditionally denoted by $\mathcal{%
R}$, and can be concretely realized as the direct limit of the
direct sequence $\left( M_{2^{n}}(\mathbb{C})\right) _{n\in \mathbb{N}}$ in
the category of tracial von Neumann algebras. The same proof as Theorem \ref%
{thm:UHF} shows that the class of full matrix algebras regarded as finite
factors is a Fra\"iss\'e class. Since a direct limit of finite factors is a
finite factor, the Fra\"iss\'e limit of the class of full matrix algebras is the
hyperfinite II$_{1}$ factor $\mathcal{R}$.  That is, we have:

\begin{thm}
The hyperfinite II$_{1}$ factor $\mathcal{R}$ is the Fra\"iss\'e limit of the class of full matrix algebras (regarded as finite factors).
\end{thm}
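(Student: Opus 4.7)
The plan is to mimic the proof of Theorem~\ref{thm:UHF} with the single adjustment that matrix algebras are now equipped with their unique traces and the ambient metric is the trace-norm rather than the operator norm. Let $\mathcal{K} = \set{M_n(\mathbb{C}) : n \in \mathbb{N}}$, each viewed as a finite factor (so with its unique normalized trace $\tau_n$ built into the structure) and generated by its standard matrix units. Morphisms are unital $\ast$-homomorphisms; since the trace on every matrix algebra is unique, every such morphism automatically preserves the trace, so I do not need to worry about trace compatibility in amalgamation diagrams.

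First I would check the three conditions for $\mathcal{K}$ to be a Fra\"iss\'e class. For WPP, note that $\mathcal{K}_m$ is nonempty only when $m = n^2$ for some $n$, and in that case it consists of a single isomorphism type, so $(\mathcal{K}_m, d^{\mathcal{K}})$ is trivially separable. For JEP, given $M_j$ and $M_k$ both embed unitally into $M_{jk}$. For AP (which, by the first lemma of Section~\ref{Section:AF} applied in the operator-norm setting, implies NAP; in trace-norm the analogous statement is even easier because an embedding between matrix algebras is a trace-preserving injective $\ast$-homomorphism, which is necessarily an isometry for the trace-norm as well), suppose $M_n \hookrightarrow M_j$ and $M_n \hookrightarrow M_k$ are given. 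Since $n \mid j$ and $n \mid k$, we have $n \mid \mathrm{lcm}(j,k)$, and both embeddings extend, up to unitary conjugacy inside $M_{\mathrm{lcm}(j,k)}$, to unital embeddings of $M_j$ and $M_k$ into $M_{\mathrm{lcm}(j,k)}$ that agree on $M_n$; this uses the Bratteli-diagram classification of embeddings of finite-dimensional algebras together with the fact that such an agreement can be arranged by a unitary inside the amalgam.

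Next I would identify the resulting Fra\"iss\'e limit $\mathcal{M}$. By Theorem~\ref{lem:FraisseLimitsExist}, $\mathcal{M}$ is a separable $\mathcal{K}$-structure, i.e.\ a tracial von Neumann algebra that is locally approximated in trace-norm by full matrix algebras; this is exactly the defining property of a hyperfinite tracial von Neumann algebra. Because $\mathcal{M}$ is $\mathcal{K}$-universal it contains copies of $M_n(\mathbb{C})$ for all $n$, hence its trace takes values on projections that are dense in $[0,1]$, and density plus completeness in trace-norm forces the trace to assume every value in $[0,1]$ on projections. In particular $\mathcal{M}$ is not a matrix algebra, so it is a II$_1$ factor (finiteness and factoriality pass to trace-norm direct limits of factors, as the center being trivial is preserved under such limits because any nontrivial central projection would persist at some finite stage). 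By Murray--von Neumann uniqueness of the hyperfinite II$_1$ factor, $\mathcal{M} \cong \mathcal{R}$.

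The only substantive point beyond the UHF argument is the step just described: checking that the Fra\"iss\'e limit really is a II$_1$ factor rather than, say, some tracial von Neumann algebra with nontrivial center. The rest is a verbatim transcription of the proof of Theorem~\ref{thm:UHF} with the trace-norm replacing the operator norm and $\mathrm{lcm}(j,k)$ replacing the largest of the two matrix sizes in the amalgamation step.
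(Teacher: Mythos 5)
Your proof is correct and follows essentially the same route as the paper, which simply runs the argument of Theorem~\ref{thm:UHF} for the class of all full matrix algebras in trace-norm and then invokes the facts that a trace-norm direct limit of finite factors is a finite factor and that the hyperfinite II$_1$ factor is unique. The only places where you add substance are the amalgamation inside $M_{\mathrm{lcm}(j,k)}(\mathbb{C})$ (needed because the matrix sizes are no longer totally ordered by divisibility, unlike in the UHF setting) and the explicit verification that the limit is II$_1$ and a factor; both are handled correctly, though your parenthetical justification of factoriality is looser than the standard conditional-expectation argument (the paper itself just asserts this fact).
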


\subsection{Finite width algebras}
We now return to considering C*-algebras.  Throughout this section we consider finite-dimensional C*-algebras as being unital, so in particular all the embeddings we consider will be unital *-homomorphisms.  To progress beyond UHF algebras, we need to consider more general classes of finite-dimensional algebras than just matrix algebras.  With the obstacles encountered in Proposition \ref{prop:AllFDNotFraisse} in mind, we make the following definitions.

\begin{defn}
\begin{enumerate}
\item{
A finite-dimensional C*-algebra $A$ has \defined{width $n$} if $A$ can be written as a direct sum of exactly $n$ matrix algebras.
}
\item{
A trace $\tau$ on a finite-dimensional C*-algebra $A$ is \defined{interior} if, when $\tau$ is written as a convex combination of the unique traces on the matrix algebras which appear as direct summands of $A$, none of the coefficients are $0$.  The trace $\tau$ is \defined{rational} if all of these coefficients are rational.
}
\end{enumerate}
\end{defn}

\begin{lem}\label{lem:AmalgamatingFDIntoMN}
Let $A, B, C$ be finite-dimensional C*-algebras of width $n$, and let $\alpha, \beta, \gamma$ be rational interior traces on $A, B, C$, respectively.  Let $\Phi : A \to B$ and $\Psi : A \to C$ be trace-preserving embeddings.  Then there exists $N \in \mathbb{N}$ such that $B$ and $C$ can be amalgamated into $M_N(\mathbb{C})$ over $A$ by trace-preserving embeddings.
\end{lem}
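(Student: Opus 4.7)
The plan is to reduce everything to a Bratteli-diagram calculation and show that once one chooses $N$ large enough to clear denominators, the equations forcing amalgamation are already a formal consequence of the hypothesis that $\Phi$ and $\Psi$ are trace-preserving. More precisely, write
\[
A=\bigoplus_{i=1}^{n} M_{a_i}(\mathbb{C}),\qquad B=\bigoplus_{j=1}^{n} M_{b_j}(\mathbb{C}),\qquad C=\bigoplus_{k=1}^{n} M_{c_k}(\mathbb{C}),
\]
and express the interior rational traces as $\alpha=\sum_i\alpha_i\tau^A_i$, $\beta=\sum_j\beta_j\tau^B_j$, $\gamma=\sum_k\gamma_k\tau^C_k$, where each of $\alpha_i,\beta_j,\gamma_k$ is a positive rational number and these families sum to $1$.

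First I would record the combinatorial description of the morphisms. The embedding $\Phi$ is determined, up to unitary conjugacy, by its multiplicity matrix $(r_{ij})$ satisfying $\sum_i r_{ij}a_i=b_j$, and similarly $\Psi$ is determined by $(s_{ik})$ with $\sum_i s_{ik}a_i=c_k$. Any unital embedding of $B$ into $M_N(\mathbb{C})$ is determined by a tuple of multiplicities $(p_j)$ with $\sum_j p_jb_j=N$, and such an embedding is trace-preserving (with respect to $\beta$ on $B$ and the unique trace on $M_N(\mathbb{C})$) if and only if $p_j=N\beta_j/b_j$. The analogous statement holds for $C$ with $q_k=N\gamma_k/c_k$.

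Next I would translate the trace-preservation of $\Phi$ and $\Psi$ into the identities
\[
\alpha_i=\sum_{j}\beta_j\frac{r_{ij}a_i}{b_j}=\sum_{k}\gamma_k\frac{s_{ik}a_i}{c_k}\qquad (1\le i\le n),
\]
and compute the multiplicity of $M_{a_i}(\mathbb{C})$ into $M_N(\mathbb{C})$ along each of the two composite embeddings $A\to B\to M_N(\mathbb{C})$ and $A\to C\to M_N(\mathbb{C})$. Using $p_j=N\beta_j/b_j$ and $q_k=N\gamma_k/c_k$, a direct calculation gives
\[
\sum_{j}p_jr_{ij}=\frac{N\alpha_i}{a_i}=\sum_{k}q_ks_{ik},
\]
so the two composites have the same Bratteli diagram and therefore are unitarily conjugate as embeddings of $A$ into $M_N(\mathbb{C})$. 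After conjugating one side we obtain the desired amalgamation.

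The last step is the choice of $N$. Since $\alpha_i,\beta_j,\gamma_k$ are all positive rationals, one can pick $N$ to be any common multiple of the denominators of $\beta_j/b_j$, $\gamma_k/c_k$, and $\alpha_i/a_i$; the interior hypothesis guarantees that the resulting multiplicities $p_j$, $q_k$, and $N\alpha_i/a_i$ are strictly positive integers, so the embeddings are genuine unital $\ast$-homomorphisms. The main point requiring care (and the only step that is not purely formal) is verifying the identity $\sum_j p_j r_{ij}=\sum_k q_k s_{ik}$; everything else is elementary, and this identity is exactly what the trace-preservation of $\Phi$ and $\Psi$ buys us once the trace on $M_N(\mathbb{C})$ has been fixed.
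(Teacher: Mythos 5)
Your proposal is correct and follows essentially the same route as the paper: reduce to Bratteli multiplicities, observe that trace-preservation into $M_N(\mathbb{C})$ forces the multiplicities $p_j=N\beta_j/b_j$ and $q_k=N\gamma_k/c_k$, check that the compatibility identity $\sum_j p_j r_{ij}=\sum_k q_k s_{ik}$ is a formal consequence of $\Phi$ and $\Psi$ being trace-preserving, and choose $N$ to clear denominators (interiority giving strict positivity). The only difference is presentational: the paper sets up the target multiplicities as unknowns and solves the resulting linear system, whereas you write down the forced solution directly, which is slightly cleaner but mathematically the same argument.
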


\begin{proof}
Write $A = M_{h_1}(\mathbb{C}) \oplus \cdots \oplus M_{h_n}(\mathbb{C})$.  For each $i$, let $\alpha_i$ be the unique trace on $M_{h_i}(\mathbb{C})$, and let $a_i \in \mathbb{Q}$ be such that $\alpha = \sum_{i=1}^n a_i\alpha_i$.  Write $B = M_{l_1}(\mathbb{C}) \oplus \cdots \oplus M_{l_n}(\mathbb{C})$, and $C = M_{k_1}(\mathbb{C}) \oplus \cdots \oplus M_{k_n}(\mathbb{C})$, and denote the traces on $B$ and $C$ by $\beta = \sum_{i=1}^n b_i\beta_i$ and $\gamma = \sum_{i=1}^n c_i\gamma_i$, respectively.  For each $i, j \leq n$, let $t_{i, j}$ be the multiplicity with which $A_i$ is embedded by $\Phi$ in $B_j$; similarly, let $q_{i, j}$ be multiplicity with which $A_i$ is embedded by $\Psi$ in $C_j$.

A direct computation from the definition of $\Phi$ (respectively, $\Psi$) being trace-preserving shows that for all $1 \leq j \leq n$,
\begin{equation}\label{eq0}
\sum_{i=1}^n \frac{b_i}{l_i}t_{j, i} = \frac{a_j}{h_j} = \sum_{i=1}^n \frac{c_i}{k_i}q_{j, i}.
\end{equation}

We consider the conditions necessary to create a trace-preserving amalgamation of $B$ and $C$ into $M_N(\mathbb{C})$.  For each $1 \leq i \leq n$, let $s_i$ be the multiplicity with which $M_{l_i}(\mathbb{C})$ is embedded in $M_N(\mathbb{C})$ by this hypothetical embedding, and let $r_i$ be similarly the multiplicity of the embedding of $M_{k_i}(\mathbb{C})$.  We immediately see that we must have
\begin{equation}\label{eq1}
\sum_{i=1}^nl_is_i = N = \sum_{i=1}^n k_ir_i.
\end{equation}
For the traces $\beta$ and $\gamma$ to be preserved (with respect to the unique trace $\delta$ on $M_N(\mathbb{C})$), we must additionally have, for each $1 \leq j \leq n$,
\begin{equation}\label{eq2}
b_j\sum_{i=1}^nl_is_i = l_js_j,
\end{equation}
and
\begin{equation}\label{eq3}
c_j\sum_{i=1}^nk_ir_i = k_jr_j.
\end{equation}

Finally, we must make our amalgamation respect $\Phi$ and $\Psi$.  It is sufficient to ensure that each $M_{h_i}(\mathbb{C})$ from $A$ embeds into $M_N(\mathbb{C})$ via $B$ and $C$ with the same multiplicities.  That is, we must satisfy the following for all $1 \leq j \leq n$:
\begin{equation}\label{eq4}
\sum_{i=1}^n t_{j, i} s_i = \sum_{i=1}^n q_{j, i}r_i.
\end{equation}

Finding any positive integers $s_1, \ldots, s_n, r_1, \ldots, r_n$ satisfying \ref{eq1}, \ref{eq2}, \ref{eq3}, and \ref{eq4} will complete the proof.

If we view Equation \ref{eq2} as a linear system in variables $s_i$ then the facts that $\sum_{i=1}^nb_i = \sum_{i=1}^nc_i = 1$ and all $b_i, c_i \neq 0$ imply that the system of equations \ref{eq2} is equivalent to
\[s_i = \frac{b_il_n}{b_nl_i}s_n \qquad \text{ for all } i < n,\]
and similarly Equation \ref{eq3} is equivalent to
\[r_i = \frac{c_ik_n}{c_nk_i}r_n \qquad \text{ for all } i < n.\]
Given these conditions, Equation \ref{eq1} reduces to 
\[r_n = \frac{l_nc_n}{b_nk_n}s_n.\]
If we choose any $s_n$ and define the remaining $r_i, s_i$ as above, straightforward substitution shows that Equation \ref{eq4} follows from Equation \ref{eq0}.  Therefore if $s_n \in \mathbb{N}$ is chosen so that the above formulas for the $s_i, r_i$ all yield integer values, then Equations \ref{eq1} - \ref{eq4} will be satisfied.
\end{proof}

\begin{prop}\label{prop:FixedWidthAP}
The class of finite-dimensional algebras of width $n \geq 2$ with a distinguished interior trace has AP.
Moreover, we can always choose the amalgam to have a rational trace. 
\end{prop}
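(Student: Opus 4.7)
The plan is to apply Lemma~\ref{lem:AmalgamatingFDIntoMN} and then lift its width-$1$ amalgam to one of width exactly $n$ by taking a diagonal $n$-fold direct sum.

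Concretely, suppose $A$, $B_0$, $B_1$ are width-$n$ finite-dimensional C*-algebras with distinguished rational interior traces $\alpha, \beta_0, \beta_1$, and let $\phi_i\colon A\to B_i$ ($i=0,1$) be trace-preserving unital embeddings. First I would invoke Lemma~\ref{lem:AmalgamatingFDIntoMN} to obtain a positive integer $N$ together with trace-preserving unital embeddings $\psi_i\colon B_i \to M_N(\mathbb{C})$ satisfying $\psi_0\phi_0=\psi_1\phi_1$ (absorbing a unitary conjugation of $M_N(\mathbb{C})$ into one of the $\psi_i$ if necessary, which is trace-preserving). Then I would take the amalgam to be $D = M_N(\mathbb{C})^{\oplus n}$, equipped with the rational interior trace $\delta = \frac{1}{n}\sum_{k=1}^n \delta_k$, where $\delta_k$ is the unique trace on the $k$-th summand; so $D$ has width exactly $n$ and carries a rational interior trace. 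The required morphisms $\widetilde{\psi}_i\colon B_i \to D$ are the diagonal maps $\widetilde{\psi}_i(b) = (\psi_i(b),\ldots,\psi_i(b))$.

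It then remains to check that the $\widetilde{\psi}_i$ are unital trace-preserving embeddings and that $\widetilde{\psi}_0\phi_0=\widetilde{\psi}_1\phi_1$. Unitality and injectivity are inherited coordinate-wise from the $\psi_i$. Trace-preservation is the computation
\[
\delta(\widetilde{\psi}_i(b)) = \frac{1}{n}\sum_{k=1}^n \delta_k(\psi_i(b)) = \tau_{M_N}(\psi_i(b)) = \beta_i(b),
\]
where $\tau_{M_N}$ is the unique trace on $M_N(\mathbb{C})$ and the last equality is because $\psi_i$ is trace-preserving. Finally, both $\widetilde{\psi}_0\phi_0(a)$ and $\widetilde{\psi}_1\phi_1(a)$ equal the constant tuple with entry $\psi_0\phi_0(a)=\psi_1\phi_1(a)$, so the amalgamation is exact.

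The real content is carried by Lemma~\ref{lem:AmalgamatingFDIntoMN}, which solves the Diophantine problem for the multiplicities of the embeddings into a single matrix algebra; the diagonal direct sum is a cheap but essential device for landing in the width-$n$ class, and the freedom to pick any positive rational weights summing to one (not only $\frac{1}{n}$) is what yields the ``moreover'' clause, making the rational interior trace on $D$ independent of the specific rational trace coefficients on $A, B_0, B_1$. The main point to watch is that one must use Lemma~\ref{lem:AmalgamatingFDIntoMN} in the form of strict equality $\psi_0\phi_0=\psi_1\phi_1$ rather than mere equality up to inner automorphism of $M_N(\mathbb{C})$; this is already what the lemma delivers, so no further work is needed on that front.
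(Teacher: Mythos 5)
Your construction is correct as far as it goes, but it only proves the proposition for the subclass of algebras whose distinguished interior traces are \emph{rational}, and that restriction is doing real work. The class in Proposition~\ref{prop:FixedWidthAP} (and the class $\mc{K}(n,\tau)$ to which it is applied in Theorem~\ref{Theorem:FraisseLimitAF}) allows arbitrary interior traces, while Lemma~\ref{lem:AmalgamatingFDIntoMN} genuinely requires rationality: its proof solves for positive integer multiplicities, and, for instance, $\bigl(\mathbb{C}\oplus\mathbb{C},\,\theta\tau_1+(1-\theta)\tau_2\bigr)$ with $\theta$ irrational admits no trace-preserving unital embedding into any single $M_N(\mathbb{C})$, since the trace of a projection in $M_N(\mathbb{C})$ is an integer multiple of $1/N$. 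Worse, your diagonal device cannot be repaired by a cleverer choice of weights: because the weights sum to $1$, the pullback of \emph{any} trace $\sum_k c_k\delta_k$ on $M_N(\mathbb{C})^{\oplus n}$ along the diagonal embedding $\widetilde{\psi}_i$ equals $\tau_{M_N}\circ\psi_i$ independently of the $c_k$, and this is always a rational trace on $B_i$; so no trace on your $D$ can restrict to an irrational $\beta_i$. Within the rational-trace subclass your argument is a clean and correct simplification, but it does not establish AP for the class as stated.

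The paper's proof is built to get around exactly this point. Since $\alpha,\beta,\gamma$ are interior, the maps $\Phi$ and $\Psi$ preserve a whole open neighbourhood $U$ of traces around $\alpha$; one picks $n$ \emph{rational} traces $\tau_1,\dots,\tau_n$ in $U$ forming the vertices of an $(n-1)$-simplex whose interior contains $\alpha$, applies Lemma~\ref{lem:AmalgamatingFDIntoMN} once for each $\tau_k$ to obtain matrix algebras $M_{N_k}(\mathbb{C})$, and sets $D=M_{N_1}(\mathbb{C})\oplus\cdots\oplus M_{N_n}(\mathbb{C})$ with the direct sum of the $n$ \emph{different} pairs of embeddings. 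Because the extremal traces of $D$ now pull back to the affinely independent $\tau_k$ rather than all to the same rational trace, a suitable convex combination of them pulls back to $\alpha$ exactly. Your amalgam is the degenerate case in which all $n$ summands carry the same embedding, which is precisely why it only reaches rational traces; relatedly, the freedom you observe in choosing the weights (which you use for the ``moreover'' clause) is a feature of the rational case only --- in general the weights are forced by the requirement of hitting $\alpha$.
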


\begin{proof}
Let $A, B, C$ be algebras of width $n$ with distinguished traces $\alpha, \beta, \gamma$, and let $\Phi : A \to B$ and $\Psi : A \to C$ be morphisms which each preserve $\alpha$.  By continuity, and the fact that $\alpha, \beta, \gamma$ are interior, the maps $\Phi$ and $\Psi$ each preserve an open neighbourhood of traces around $\alpha$.  Let $U$ be the intersection of these neighbourhoods, so $\Phi$ and $\Psi$ both preserve $U$.

Let $\tau_1, \ldots, \tau_n$ be rational traces on $A$ which form the vertices of an $(n-1)$-simplex contained in $U$.  Apply Lemma \ref{lem:AmalgamatingFDIntoMN} to each $\tau_i$ to produce matrix algebras $M_{N_1}(\mathbb{C}), \ldots, M_{N_n}(\mathbb{C})$ which embed $B$ and $C$ over $A$ with trace-preserving embeddings.  Let $D = M_{N_1}(\mathbb{C}) \oplus \cdots \oplus M_{N_n}(\mathbb{C})$, and embed $B$ and $C$ into $D$ by taking the direct sum of the embeddings into each $M_{n_i}(\mathbb{C})$; let $\Theta$ be the resulting embedding of $A$ into $D$.  The extremal traces on $D$ are mapped by $\Theta$ to the $\tau_i$, so by convexity there is some interior rational trace $\delta$ on $D$ which is mapped by $\Theta$ to $\alpha$.  Then $(D, \delta)$ is the required amalgam of $B$ and $C$ over $A$.
\end{proof}

We can now show that certain classes of finite-dimensional algebras are Fra\"iss\'e classes.  To obtain information about the Fra\"iss\'e limits we will use the $K_0$ functor.  To each unital C*-algebra $A$ is associated an abelian group $K_0(A)$, and to each embedding $f : A \to B$ an injective group homomorphism $K_0(f) : K_0(A) \to K_0(B)$.  Since we will not explicitly need the construction of $K_0$, we refer the reader to \cite{Davidson1996}, \cite{RoLaLa:Introduction},  or \cite{Farah2013a} for the definition.

\begin{thm}\label{Theorem:FraisseLimitAF}
For each $n \geq 2$, and each interior trace $\tau$ on $\mathbb{C}^n$, the class $\mc{K}(n, \tau)$ of finite-dimensional C*-algebras $A$ of width $n$ with a distinguished interior trace $\alpha$ such that there is an embedding of $\mathbb{C}^n$ into $A$ which preserves $\tau$, is a Fra\"iss\'e class.

The Frai\"ss\'e limit of $\mc{K}(n, \tau)$ is simple, has a unique trace, and is not self-absorbing.  As an abelian group, the $K_0$ group of the Fra\"iss\'e limit is divisible and of rank $n$.  Hence when $n \neq m$, the limits of $\mc{K}(n, \tau)$ and $\mc{K}(m, \sigma)$ are non-isomorphic.
\end{thm}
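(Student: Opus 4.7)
The plan is to verify the three Fra\"iss\'e class axioms first, then analyze the limit $M$ via its approximating chain of finite-dimensional structures and its $K_{0}$ group.

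The amalgamation property is inherited directly from Proposition~\ref{prop:FixedWidthAP}: given $(A,\alpha), (B_{0},\beta_{0}), (B_{1},\beta_{1}) \in \mc{K}(n,\tau)$ and trace-preserving morphisms $\phi_{i} : A \to B_{i}$, the amalgam $(C,\gamma)$ produced there has width $n$ with an interior (and in fact rational) trace, and the composite $\mathbb{C}^{n} \hookrightarrow A \to C$ still preserves $\tau$, so $(C,\gamma) \in \mc{K}(n,\tau)$. JEP follows from AP by amalgamating any two members of $\mc{K}(n,\tau)$ over the common substructure $(\mathbb{C}^{n},\tau)$. For WPP, the substructures with rational distinguished traces form a countable set up to isomorphism and are dense in each $\mc{K}(n,\tau)_{k}$ under $d^{\mc{K}}$, since rational interior traces are dense in the simplex of interior traces and this density translates into small perturbations of the generators.

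Let $M = \varinjlim A_{k}$ be the resulting limit, with each $A_{k} \in \mc{K}(n,\tau)$. To see $M$ is simple, note that for a nonzero projection $p \in A_{k}$, amalgamation produces $(B,\beta) \in \mc{K}(n,\tau)$ containing $A_{k}$ in which the image of $p$ has positive multiplicity in every direct summand; by approximate homogeneity, $B$ may be realized as some $A_{m} \supseteq A_{k}$ inside $M$. The ideal of $A_{m}$ generated by $p$ is then $A_{m}$ itself, and by density of the chain any nonzero ideal of $M$ equals $M$. The $K_{0}$ computation begins from $K_{0}(A_{k}) \cong \mathbb{Z}^{n}$ with generators the classes of minimal projections in each of the $n$ summands. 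In the amalgamation of Proposition~\ref{prop:FixedWidthAP}, the amalgam $D = \bigoplus_{j=1}^{n} M_{N_{j}}(\mathbb{C})$ has its $j$-th summand corresponding, via Lemma~\ref{lem:AmalgamatingFDIntoMN}, to the $j$-th vertex $\tau_{j}$ of an $(n{-}1)$-simplex of interior traces on $A$; a direct computation shows the multiplicity matrix of $A \to D$ factors as a diagonal rescaling of the matrix whose columns are the coefficient vectors of $\tau_{1},\ldots,\tau_{n}$, which are linearly independent in $\mathbb{R}^{n}$ because the $\tau_{j}$ are affinely independent points lying in the hyperplane $\sum_{i} x_{i} = 1$. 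Hence the multiplicity matrix is invertible over $\mathbb{Q}$, and $K_{0}(M)$ has rank exactly $n$. For divisibility, the free integer parameter $s_{n}$ in Lemma~\ref{lem:AmalgamatingFDIntoMN} can be scaled by any $d \geq 1$, producing amalgams in which every relevant multiplicity is divisible by $d$, so any projection class $[p]$ becomes $d$ times a subprojection class.

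For unique trace, simplicity identifies $T(M)$ affinely with the state space of $(K_{0}(M),[1_{M}])$, so it suffices to show the latter has a unique state. The plan is to use amalgamation to prove that any $x \in K_{0}(M)$ with $\tau_{M}(x) = 0$ is infinitesimal, that is, $\pm N x \leq [1_{M}]$ for every $N \geq 1$; every state must then agree with $\tau_{M}$. Non-self-absorption follows from the K\"unneth formula for AF algebras: $K_{0}(M \otimes M) \cong K_{0}(M) \otimes_{\mathbb{Z}} K_{0}(M)$ has rank $n^{2} > n$ for $n \geq 2$, so $M \not\cong M \otimes M$. Finally, if $n \neq m$ then the limits of $\mc{K}(n,\tau)$ and $\mc{K}(m,\sigma)$ have $K_{0}$ groups of different rank, hence are non-isomorphic. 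I expect the main obstacle to be the unique trace argument, which requires amalgamations to be controlled finely enough to bound arbitrary multiples of a trace-zero class by the unit in the dimension group order. The other verifications reduce fairly cleanly to iterating Proposition~\ref{prop:FixedWidthAP} and Lemma~\ref{lem:AmalgamatingFDIntoMN} inside the approximating chain.
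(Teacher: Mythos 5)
Your overall architecture matches the paper's: AP is quoted from Proposition~\ref{prop:FixedWidthAP}, JEP comes from amalgamating over the minimal element $(\mathbb{C}^n,\tau)$, simplicity follows from arranging that every summand of a given stage eventually maps into every summand of a later stage (the paper then invokes \cite[Corollary III.4.3]{Davidson1996}), the rank computation uses $K_0(A_k)\cong\mathbb{Z}^n$ with injective connecting maps, and non-self-absorption is the same K\"unneth/rank argument. Your explicit divisibility argument (rescaling the free parameter $s_n$ of Lemma~\ref{lem:AmalgamatingFDIntoMN} by $d$ so that all multiplicities become divisible by $d$, arranged cofinally in the generic chain) is a genuine addition: the paper asserts divisibility of $K_0$ but its proof only addresses the rank.

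There are, however, two genuine gaps. First, your WPP argument fails as stated: members with rational distinguished traces are \emph{not} $d^{\mc{K}}$-dense. If $(B,\beta)$ and $(B,\beta')$ carry distinct traces, then under any pair of trace-preserving embeddings into a common $(C,\gamma)$ the images of a minimal central projection $p$ with $\beta(p)\neq\beta'(p)$ have different $\gamma$-traces, hence different ranks in some summand of $C$, hence are at operator-norm distance at least $1$; since $p$ is a fixed $*$-polynomial in the distinguished generators, $d^{\mc{K}}$ between the two structures is bounded below by a constant depending only on $B$. So a small perturbation of the trace is \emph{not} a small perturbation of the generators, and the compatible interior traces on a fixed $B$ can form an uncountable uniformly discrete family. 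The paper does not make a density claim; it argues instead that every member embeds into one of countably many rational-trace members and that the substructure space of a fixed member is $d^{\mc{K}}$-separable. Whatever care that route itself requires, your shortcut in particular does not work. Second, the unique-trace step is only a plan: reducing to states on $(K_0(M),[1_M])$ and showing that trace-zero classes are infinitesimal is a legitimate alternative to the paper's argument, but all of the content lies in the infinitesimal claim, and that claim needs exactly the mechanism the paper builds in explicitly --- at each amalgamation only a small neighbourhood of the distinguished trace is preserved, and these neighbourhoods are made to shrink generically so that no other trace survives to the limit (equivalently, so that the normalized dimension vectors of a trace-zero class tend to zero along the chain). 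Without supplying that estimate, neither uniqueness of the trace nor your infinitesimal claim is established.
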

\begin{proof}
It follows from Proposition \ref{prop:FixedWidthAP} that this class has AP, and since this class has a minimal element, JEP is a consequence of AP. 
By  Proposition~\ref{prop:FixedWidthAP} we have countably members of $\mc{K}(n, \tau)$ (namely, finite-dimensional 
algebras with distinguished rational traces) such that every other member of $\mc{K}(n, \tau)$  embeds into one of them. 
Since the space of substructures of a fixed member of $\mc{K}(n, \tau)$ is separable in  $d^{\mc{K}}$, 
we conclude that $\mc{K}(n, \tau)$ has WPP. 

Let $A$ denote the Fra\"iss\'e limit of $\mc{K}(n, \tau)$.  It is clear from the proof of Proposition \ref{prop:FixedWidthAP} that whenever a finite-dimensional algebra $B$ appears in the construction of $A$, at some future stage there is a finite-dimensional algebra $C$ such that each direct summand of $B$ embeds into each direct summand of $C$.  By \cite[Corollary III.4.3]{Davidson1996} the limit $A$ is simple.

At each stage of the amalgamation in the proof of Proposition \ref{prop:FixedWidthAP} we have a $(B, \rho) \in \mc{K}(n, \tau)$, and we choose an open set around $\rho$ which is preserved by the relevant embeddings.  Given any trace $\sigma$ on $B$ other than $\rho$, in a future stage we may amalgamate with $(B, \rho)$ again, this time choosing an open set around $\rho$ which does not include $\sigma$.  So only the trace $\rho$ is preserved to the limit algebra $A$, and hence $A$ has a unique trace.

For the remaining claims, we consider $K_0(A)$.  For any choice of sequence $A_k$ from $\mc{K}(n, \tau)$ such that $A = \overline{\bigcup_{k\geq 1}A_k}$, we have $K_0(A) = \varinjlim K_0(A_k)$ (see \cite[Theorem IV.3.3]{Davidson1996}).  Each $A_k$ is a direct sum of exactly $n$ matrix algebras, so as abelian groups, $K_0(A_k) \cong \mathbb{Z}^n$.  The maps in $\mc{K}(n, \tau)$ are embeddings, and so the maps in the direct limit of $K_0$ groups are injective.  For torsion-free groups rank can be defined directly in terms of linear independence, and it follows that the direct limit of rank $n$ torsion-free abelian groups via injective maps has rank $n$; therefore we have $\op{rank}(K_0(A)) = n$.

Finally, we show that $A$ is not self-absorbing.  By the Kunneth formula for C*-algebras \cite{Schochet} there is an injective map $K_0(A) \otimes K_0(A) \to K_0(A \otimes A)$.  As $K_0(A)$ has rank $n$, we have that $K_0(A) \otimes K_0(A)$ has rank $n^2$, and hence cannot be injected into the rank $n$ group $K_0(A)$.  Therefore $K_0(A\otimes A) \not\cong K_0(A)$, and also $A \not\cong A \otimes A$.
\end{proof}

\section{The Jiang-Su algebra}\label{Section:JiangSu}
The Jiang-Su algebra $\mc{Z}$ was constructed by Jiang and Su in \cite{JiangSu}. 
This infinite-dimensional algebra is K-theoretically indistinguishable from the one-dimensional algebra $\mathbb C$. 
The tensorial absorption of the $\mc{Z}$ plays a central role in Elliott's classification program of nuclear C*-algebras  
 (see e.g.\ \cite{elliott_regularity_2008} and the introduction to \cite{sato2014nuclear}).  $\mc{Z}$ exhibits many of the properties of a Fra\"iss\'e limit.  In this section we show that $\mc{Z}$ is indeed a Fra\"iss\'e limit.  We begin with some basic definitions and properties.

\begin{defn}
Fix $p, q \in \mathbb{N}$.  The \defined{dimension drop algebra} $\mc{Z}_{p, q}$ is defined to be
(we identify $M_p(\bbC)\otimes M_q(\bbC)$ and $M_{pq}(\bbC)$)
\[
\mc{Z}_{p, q} = \set{f \in C\bigl([0, 1], M_{pq}(\mathbb{C})\bigr) : f(0) \in M_p(\mathbb{C}) \otimes 1_q \mbox{ and } f(1) \in 1_p \otimes M_q(\mathbb{C})},
\]
considered as a C*-algebra with the operations inherited from $C([0, 1], M_{pq}(\mathbb{C}))$.

A dimension drop algebra $\mc{Z}_{p, q}$ is \defined{prime} if $p$ and $q$ are co-prime. 
\end{defn}

Prime dimension drop algebras are projectionless (i.e., do not have projections other than 0 and 1). As an inductive limit of projectionless algebras, 
$\cZ$ is projectionless as well, and moreover its  $K_0$  coincides with $K_0$ of $\bbC$.

 Given a probability measure $\mu$ on $[0,1]$ there is a natural trace $\tau_\mu$ on $\mc{Z}_{p,q}$ given by
$$
\tau_\mu(f) = \int^1_0 \tau(f(t)) d\mu
$$ 
where $\tau$ is the unique trace on $M_{pq}(\mathbb{C})$.  
By using Riesz representation theorem for bounded linear functionals on $C([0,1])$ 
and the uniqueness of traces on fibres of $\cZ_{p,q}$ one shows that all traces of $\mc{Z}_{p,q}$ are of this form, 
hence $T(\mc{Z}_{p,q})$ is affinely homeomorphic to the space of probability measures on $[0,1]$. 

We need to remind the reader of a number of facts about measures before we can define the class $\mc{K}$ for which $\mc{Z}$ is a Fra\"iss\'e limit.  We say that a probability measure $\mu$ on $[0,1]$ is 
faithful and diffuse if the function $u(t) =\mu([0,t])$ is a strictly increasing and continuous.  This will imply that the trace defined above as $\tau_\mu$ is faithful and $\mu$ is diffuse as a measure i.e. for every $F \subseteq [0,1]$ with $\mu(F) > 0$ there is $E \subset F$ such that $\mu(E) < \mu(F)$.

\begin{fact}\label{fact:measures}
If $\mu$ is a faithful and diffuse probability measure on $[0,1]$ and $u(t) = \mu([0,t])$ then 
 for any $f \in C([0,1])$, 
\[
\int^1_0 f d\mu = \int^1_0 f(u(t)) dt
\]
where $dt$ is Lebesgue measure on $[0,1]$.
\end{fact}

 We will say that a trace $\tau_\mu$ on $\mc{Z}_{p,q}$ is faithful and diffuse if the associated measure is.

\begin{fact}\label{fact:diffuse}
Suppose that $\tau_\mu$ and $\tau_\lambda$ are two faithful and diffuse traces on a prime dimension drop algebra $\mc{Z}_{p,q}$ then there is an automorphism $\sigma$ of $\mc{Z}_{p,q}$ such that $\tau_\mu = \tau_\lambda \circ \sigma$.
\end{fact}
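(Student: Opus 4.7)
The plan is to realize $\sigma$ as a reparametrization of the base interval $[0,1]$. To any homeomorphism $h\colon[0,1]\to [0,1]$ fixing the endpoints I associate the $*$-automorphism $\sigma_h(f)=f\circ h$. The conditions $h(0)=0$ and $h(1)=1$ ensure that the boundary conditions $\sigma_h(f)(0)\in M_p(\bbC)\otimes 1_q$ and $\sigma_h(f)(1)\in 1_p\otimes M_q(\bbC)$ are preserved, so $\sigma_h$ is indeed an automorphism of $\mc{Z}_{p,q}$ with inverse $\sigma_{h^{-1}}$. A routine change-of-variables computation gives
\[
 \tau_\lambda(\sigma_h(f))=\int_0^1\tau(f(h(t)))\,d\lambda(t)=\int_0^1\tau(f(s))\,d(h_*\lambda)(s)=\tau_{h_*\lambda}(f),
\]
so the problem reduces to producing an $h$ with $h_*\lambda=\mu$.

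To build such an $h$, set $u(t)=\mu([0,t])$ and $v(t)=\lambda([0,t])$. By Fact~\ref{fact:measures}(1) these are continuous non-decreasing maps $[0,1]\to[0,1]$ sending $0$ to $0$ and $1$ to $1$. Under the natural assumption that both measures have full support (equivalently, that $\tau_\mu$ and $\tau_\lambda$ are faithful), $u$ and $v$ are strictly increasing and hence homeomorphisms of $[0,1]$. I then put $h=u^{-1}\circ v$, which is a homeomorphism of $[0,1]$ fixing the endpoints. A direct CDF computation, using $h^{-1}=v^{-1}\circ u$, gives
\[
 (h_*\lambda)([0,t])=\lambda\bigl([0,h^{-1}(t)]\bigr)=v(v^{-1}(u(t)))=u(t)=\mu([0,t]),
\]
so $h_*\lambda=\mu$ by uniqueness of the cumulative distribution function. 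Combined with the previous paragraph, this yields $\tau_\lambda\circ\sigma_h=\tau_\mu$.

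The step I expect to be the main subtlety is the strict monotonicity of $u$ and $v$. Atomlessness alone, which is what the definition of ``diffuse'' given in the excerpt literally provides, makes the CDFs continuous but not necessarily strict: if $\mu$ vanishes on some open sub-interval then $u$ is flat there, and no endpoint-preserving homeomorphism of $[0,1]$ can push a full-support measure onto $\mu$. I would therefore read ``diffuse'' in the statement as implicitly including full support of the measure (equivalently, faithfulness of the induced trace), which is the natural hypothesis under which the above construction goes through.
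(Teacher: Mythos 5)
Your proof is correct and takes essentially the same approach as the paper's: the paper reduces to the case where $\lambda$ is Lebesgue measure and takes $\sigma(f)=f\circ u$ with $u$ built from the cumulative distribution function of $\mu$, which is your $\sigma_h$ specialized to $v=\mathrm{id}$. Your caveat about strict monotonicity is well taken --- the paper's argument likewise needs the distribution functions to be homeomorphisms of $[0,1]$, so ``diffuse'' must indeed be read as atomless with full support.
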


\begin{proof} 
It suffices to prove this when $\lambda$ is Lebesgue measure on $[0,1]$.  Let $u(t) = \mu([0,t])$.  Then the map $\sigma:\mc{Z}_{p,q} \rightarrow \mc{Z}_{p,q}$ given by
$\sigma(f) = f(u)$ is easily seen to be the desired automorphism.
\end{proof} 

The class $\mc{K}$ that we will consider is the class of all pairs $(\mc{Z}_{p,q},\tau)$ where $p$ and $q$ are co-prime and $\tau$ is a faithful and diffuse trace on $\mc{Z}_{p,q}$.  The language for this class will contain the usual language of C*-algebras together with a relation for a trace.

The original construction of the Jiang-Su algebra was as an inductive limit of a sequence of prime dimension drop algebras. It has a unique (definable) trace which when we refer to it, we will call $\tau$.  When we consider $\mc{Z}$ as a structure in our language with a relation for the trace, we will mean that $\mc{Z}$ is expanded by this unique trace. The key properties of $\mc{Z}$ that we will need are contained in the following lemmas.  

\begin{lem}
\label{lem:JiangSuProperties}
Every $(A,\tau) \in \mc{K}$ embeds in a trace-preserving manner into $\mc{Z}$.  In fact, $\mc{Z}$ is an inductive limit of a chain $(A_n,\tau_n)$ from $\mc{K}$ where $(A_0,\tau_0) = (A,\tau)$.  In particular, $\mc{Z}$ is a $\mc{K}$-structure.
\end{lem}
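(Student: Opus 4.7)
The plan is to construct an inductive chain $(A_n,\tau_n)_{n\geq 0}$ in $\mc{K}$ with trace-preserving unital connecting maps $\phi_n\colon A_n\to A_{n+1}$ whose inductive limit (in the category of C*-algebras with distinguished trace) is $(\mc{Z},\tau_{\mc{Z}})$ and which begins with $(A_0,\tau_0)=(A,\tau)$. All three assertions of the lemma are immediate consequences of such a construction: the trace-preserving embedding $A\hookrightarrow\mc{Z}$ is obtained from the canonical map into the limit, and the $\mc{K}$-structure property follows because the union of the $A_n$ is dense in $\mc{Z}$.

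The starting point is the original construction of \cite{JiangSu}, which realizes $\mc{Z}$ as an inductive limit of prime dimension drop algebras $\mc{Z}_{p_n,q_n}$ via specific unital connecting *-homomorphisms $\iota_n$. Because $\mc{Z}$ has a unique trace $\tau_{\mc{Z}}$, pulling this trace back along the canonical inclusions yields traces $\tau_n$ on each $\mc{Z}_{p_n,q_n}$, making each $\iota_n$ automatically trace-preserving via the functor $\phi\mapsto\phi_*$. The key technical point is to verify that each $\tau_n$ is diffuse, so that $(\mc{Z}_{p_n,q_n},\tau_n)\in\mc{K}$. This follows from the explicit form of the Jiang-Su embeddings: they have the shape $f\mapsto v^{\ast}\op{diag}(f\circ\xi_{n,i})v$ for continuous paths $\xi_{n,i}\colon[0,1]\to[0,1]$ satisfying prescribed boundary conditions and a unitary $v$, and the induced pullback of any diffuse probability measure on $[0,1]$ is a convex combination of pushforwards under the $\xi_{n,i}$, which remains diffuse provided the paths do not all collapse to single points---a condition satisfied in the Jiang-Su construction.

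To prepend $(A_0,\tau_0)=(A,\tau)$, write $A=\mc{Z}_{p_0,q_0}$ and choose the Jiang-Su tail so that $(p_1,q_1)$ is coprime with $p_0\mid p_1$ and $q_0\mid q_1$. Pick any unital embedding $\iota\colon\mc{Z}_{p_0,q_0}\to\mc{Z}_{p_1,q_1}$ of the standard form. The pullback trace $\iota_{\ast}(\tau_1)$ on $A$ is diffuse by the same argument as above. Since $\tau$ is also diffuse, Fact \ref{fact:diffuse} supplies $\sigma\in\Aut(A)$ with $\tau=\iota_{\ast}(\tau_1)\circ\sigma$. Setting $\phi_0:=\iota\circ\sigma$ gives a trace-preserving unital embedding $(A,\tau)\hookrightarrow(\mc{Z}_{p_1,q_1},\tau_1)$. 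Defining $A_n:=\mc{Z}_{p_n,q_n}$ and $\phi_n:=\iota_n$ for $n\geq 1$ completes the chain, whose inductive limit is $(\mc{Z},\tau_{\mc{Z}})$.

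The principal obstacle is establishing diffuseness of $\tau_n$ and of $\iota_{\ast}(\tau_1)$, since both membership in $\mc{K}$ at each stage and the applicability of Fact \ref{fact:diffuse} at the zeroth step depend on it. This reduces to a direct path-integral computation using the explicit formulas of \cite{JiangSu}; everything else in the argument is bookkeeping of the inductive limit and application of Fact \ref{fact:diffuse} to twist any given embedding into a trace-preserving one.
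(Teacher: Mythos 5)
Your proposal is correct and follows essentially the same route as the paper, whose entire proof is a citation of Propositions 2.5 and 2.8 of \cite{JiangSu}: you simply unfold that citation, pulling the unique trace of $\mc{Z}$ back along the Jiang--Su inductive system and prepending $(A,\tau)$ via Fact \ref{fact:diffuse}. One small correction: a convex combination $\sum_i c_i(\xi_{n,i})_*\mu$ with all $c_i>0$ is diffuse only if \emph{every} pushforward $(\xi_{n,i})_*\mu$ is atomless, so the condition you need is that no path $\xi_{n,i}$ is constant on a set of positive $\mu$-measure (not merely that the paths do not \emph{all} collapse to single points); this does hold for the Jiang--Su connecting maps, whose paths are non-constant affine functions.
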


\begin{proof} These facts follow immediately from the main construction in \cite{JiangSu}; see Propositions 2.5 and 2.8  \end{proof}

The following result is implicit in \S 3 of \cite{JiangSu}; we give a proof for completeness.
\begin{lem}
\label{lem:KhasJEP}
$\mc{K}$ has the joint embedding property.
\end{lem}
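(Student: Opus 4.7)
The plan is to construct, given $(A, \tau_A) = (\cZ_{p_1, q_1}, \tau_1)$ and $(B, \tau_B) = (\cZ_{p_2, q_2}, \tau_2)$ in $\mc{K}$, a common amalgam $(\cZ_{P, Q}, \tau_C) \in \mc{K}$ into which both embed by trace-preserving unital $*$-homomorphisms.

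The first step is to choose coprime $P, Q$ for which structural embeddings $\cZ_{p_i, q_i} \hookrightarrow \cZ_{P, Q}$ can exist. A unital $*$-homomorphism $\phi\colon \cZ_{p_i, q_i} \to \cZ_{P, Q}$ restricts at the endpoint $s=0$ to a unital $*$-homomorphism $\cZ_{p_i, q_i} \to M_P(\bbC)$. Every irreducible representation of $\cZ_{p_i, q_i}$ is an evaluation at some point of $[0,1]$, with image dimension $p_i$, $q_i$, or $p_i q_i$, so such a restriction decomposes as a direct sum of point evaluations with multiplicities. Hence $P$ must be expressible as a non-negative integer combination of $p_i, q_i, p_i q_i$, and analogously $Q$ at $s = 1$. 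Since $\gcd(p_i, q_i) = 1$, the Sylvester--Frobenius theorem realizes every integer at least $(p_i-1)(q_i-1)$ as such a combination; choosing $P$ and $Q$ to be two distinct primes larger than $\max\{(p_1-1)(q_1-1),(p_2-1)(q_2-1)\}$ then gives coprime $P, Q$ admitting the required decompositions simultaneously for $i = 1, 2$.

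Second, for each $i$ I would construct an explicit unital $*$-embedding $\phi_i\colon \cZ_{p_i, q_i} \to \cZ_{P, Q}$ by prescribing the direct sums of point evaluations realizing the decompositions of $P$ and $Q$ at the endpoints $s = 0, 1$, and interpolating over $s \in (0,1)$ via continuously varying evaluation points conjugated by a continuous path of unitaries in $M_{PQ}(\bbC)$, much as in the original Jiang--Su construction. Choosing the evaluation points to vary non-trivially with $s$ and to collectively cover $[0,1]$ ensures that $\phi_i$ is injective and that the pull-back $\phi_i^{*}(\tau_\lambda^{P, Q})$ of the trace $\tau_\lambda^{P, Q}$ on $\cZ_{P, Q}$ associated to Lebesgue measure is a diffuse trace on $\cZ_{p_i, q_i}$.

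Finally, by Fact~\ref{fact:diffuse} there is an automorphism $\theta_i$ of $\cZ_{p_i, q_i}$ with $\tau_i = \phi_i^{*}(\tau_\lambda^{P, Q}) \circ \theta_i$; replacing $\phi_i$ by $\phi_i \circ \theta_i$ yields the required trace-preserving embedding $(\cZ_{p_i, q_i}, \tau_i) \to (\cZ_{P, Q}, \tau_\lambda^{P, Q})$, and $(\cZ_{P, Q}, \tau_\lambda^{P, Q}) \in \mc{K}$ is the desired joint embedding. The hard part is the explicit construction in the second step: one must verify that the prescribed boundary $*$-homomorphisms can be joined by a continuous family whose interior representations have total dimension $PQ$ and degenerate correctly to images inside $M_P(\bbC) \otimes 1_Q$ and $1_P \otimes M_Q(\bbC)$ at $s = 0$ and $s = 1$ respectively. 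This amounts to a dimension count together with a path-connectivity argument in the unitary group of $M_{PQ}(\bbC)$, in the spirit of standard manipulations with dimension drop algebras.
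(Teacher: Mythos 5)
Your route is genuinely different from the paper's, and its skeleton is sound, but the step you yourself flag as ``the hard part'' is more than a dimension count and is where the real content lies. The Sylvester--Frobenius condition on $P$ and $Q$ only guarantees that the two \emph{boundary} homomorphisms $\cZ_{p_i,q_i}\to M_P(\bbC)$ and $\cZ_{p_i,q_i}\to M_Q(\bbC)$ exist separately. To get a unital $*$-homomorphism into $\cZ_{P,Q}$ you need a norm-continuous path of $PQ$-dimensional representations joining $\rho_0\otimes 1_Q$ (so every irreducible constituent at $s=0$ occurs with multiplicity divisible by $Q$) to $1_P\otimes\rho_1$ (multiplicities divisible by $P$ at $s=1$). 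Along such a path the only permitted type changes are that a $p_iq_i$-dimensional point evaluation degenerates into $q_i$ copies of the $p_i$-dimensional boundary representation near $0$, or $p_i$ copies of the $q_i$-dimensional one near $1$; so the multiplicity vectors at the two ends must differ by an integer combination of $(q_i,0,-1)$ and $(0,p_i,-1)$. This is a congruence condition, not a dimension count. It does in fact hold automatically (reducing $P=a_0p_i+b_0q_i+c_0p_iq_i$ modulo $q_i$ forces $a_0\equiv Pp_i^{-1}$, and similarly for the other coefficients, after which the obstruction $Qa_0-Pa_1\pmod{q_i}$ vanishes), but that verification, plus checking that the intermediate multiplicities stay non-negative and that the abstract path is realized by continuous eigenvalue functions and unitaries, is essentially a re-proof of the connecting-map construction of \cite[Proposition 2.5]{JiangSu}. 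As written, your proposal asserts this step rather than proving it. Your use of Fact~\ref{fact:diffuse} to correct the trace at the end is correct and matches the paper.

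The paper avoids this bookkeeping entirely. It first invokes \cite[Proposition 2.5]{JiangSu} to replace $(A,\tau_A)$ and $(B,\tau_B)$ by $(\cZ_{p,q},\tau_A)$ and $(\cZ_{p',q'},\tau_B)$ for four distinct primes $p,p',q,q'$, and then embeds both into the single algebra $\cZ_{pp',qq'}$ by the ``tensor with the identity'' maps $a\mapsto(a_1\otimes 1)\otimes(a_2\otimes 1)$ and $b\mapsto(1\otimes b_1)\otimes(1\otimes b_2)$, which visibly respect the dimension-drop boundary conditions and require no interpolation argument at all; the trace is then matched using the automorphisms from Fact~\ref{fact:diffuse}, as in your last step. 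If you want to complete your direct approach, you should either carry out the congruence and path-connectivity analysis above explicitly, or note that it is exactly what \cite{JiangSu} provides and cite it at that point rather than deferring it as routine.
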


\begin{proof}
Suppose $(p,q) = 1$.  We will show that $A = \mc{Z}_{p,q}$ embeds into $B = \mc{Z}_{pq,k}$ for any prime $k > pq$.  Because of this inequality, we can write $k = ap + bq$ for some positve $a$ and $b$.  Define a *-homomorphism $\tilde \varphi: \mc{Z}_{p,q} \rightarrow C([0,1],M_{pqk}(\bbC))$ as follows, for $t \in [0,1]$:
\[
\tilde \varphi(f)(t) = \left (
\begin{array}{cccccc}
f(0) &\ldots  & 0 &  & &\\
 \vdots& \ddots &\vdots  & && \\
 0& \ldots& f(0) &&&\\
 &  &  & f(t) & \ldots&0  \\
& & & \vdots& \ddots & \vdots\\
& & & 0&\ldots  & f(t)
\end{array} \right )
\]
where there are $ap$ copies of $f(0)$ and $bq$ copies of $f(t)$ on the diagonal.  $\tilde \varphi(f)(0) = f(0) \otimes id$ and we can find a unitary $u(1)$ such that $u^*(1)\tilde\varphi(f)u(1) \in id \otimes M_k(\bbC)$.  If we choose a continuous path of unitaries $u$ on $[0,1]$ from $id$ to $u(1)$ then $\varphi(f) = u^*\tilde\varphi(f) u$ is our desired map.  We now want to see that $\varphi$ can be chosen to be trace-preserving in our class $\mc{K}$.  In light of Fact \ref{fact:diffuse}, if $\tau$ is the trace induced on $B$ by Lebesgue measure, we need to show that $\tau$ restricted to the image of $A$ under $\varphi$ is a faithful and diffuse trace on $A$.  But from the form of $\tilde \varphi$, this is clear.

Finally, suppose $(p,q) = 1$ and $(p',q') = 1$.  Let $n$ be a common multiple of $pq$ and $p'q'$ and $k$ some prime bigger than $n$.  Then from above, both $\mc{Z}_{p,q}$ and $\mc{Z}_{p',q'}$ embed into $\mc{Z}_{n,k}$ preserving any faithful and diffuse trace. 
\end{proof}

The following lemma will be critical for establishing that $\mc{K}$ has the near amalgamation property.  Here, if $u$ is a unitary, $\mathrm{Ad}\left( u\right) :x\mapsto uxu^{\ast }$ denotes the inner automorphism associated with $u$.
\begin{lem}
\label{lem:ZRoberts}
Suppose that $A \in \mc{K}$ and $\varphi ,\psi :A\rightarrow 
\mc{Z}$ are trace-preserving embeddings.  If $\bar{a}\in A$ and $\epsilon >0$,
then there is a unitary $u \in \mc{Z}$ such that 
\[
\| (\mathrm{Ad}(u) \circ \varphi)(\bar a) - \psi(\bar a) \| < \epsilon
\]
\end{lem}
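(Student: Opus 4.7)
The plan is to reduce this approximate-uniqueness statement to a classification theorem for unital *-homomorphisms out of a $1$-dimensional noncommutative CW complex. The prime dimension drop algebra $A = \mc{Z}_{p,q}$ is such an NCCW complex, while the target $\mc{Z}$ is simple, unital, separable, nuclear, stably finite, $\mc{Z}$-stable (in fact self-absorbing), and has a unique trace. For targets of this type, Robert's classification theorem (from his paper \emph{Classification of inductive limits of $1$-dimensional NCCW complexes}) asserts that two unital *-homomorphisms $\varphi, \psi : A \to \mc{Z}$ are approximately unitarily equivalent in $\mc{Z}$ if and only if they induce the same map on the Cuntz semigroup. Invoking this theorem would give the lemma immediately, modulo verifying the Cuntz-semigroup hypothesis.

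First I would check that this hypothesis is automatic under our assumptions. Since $A$ is projectionless with trivial ordered $K_0$, every nonzero element of $\op{Cu}(A)$ is ``soft,'' and $\op{Cu}(A)$ is completely encoded by the lower-semicontinuous affine functionals on the tracial state space $T(A)$ together with the class of the unit. On the $\mc{Z}$ side, $\op{Cu}(\mc{Z})$ is analogously encoded by the single trace. The hypothesis that $\varphi$ and $\psi$ are both trace-preserving, namely $\tau \circ \varphi = \tau \circ \psi$ on $A$, where $\tau$ is the unique trace on $\mc{Z}$, is therefore exactly what is needed to identify $\op{Cu}(\varphi)$ with $\op{Cu}(\psi)$. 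Unitality supplies the agreement at the class of the unit.

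Applying Robert's theorem to the now-verified data yields approximate unitary equivalence of $\varphi$ and $\psi$. Specialising to the finite tuple $\bar a$ and the tolerance $\epsilon$ in the statement then produces a unitary $u \in \mc{Z}$ with $\|\Ad(u)(\varphi(\bar a)) - \psi(\bar a)\| < \epsilon$, which is the conclusion of the lemma.

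The main obstacle is the reliance on Robert's machinery. A more self-contained route would first use Fact~\ref{fact:diffuse} to precompose $\varphi$ and $\psi$ with suitable automorphisms of $A$ so that the two trace-preserving embeddings induce the same diffuse measure on $[0,1]$, then realise $\mc{Z}$ as an inductive limit of prime dimension drop algebras via Lemma~\ref{lem:JiangSuProperties}, and finally match the two embeddings inside some sufficiently large $\mc{Z}_{p', q'}$ in the chain by a fibrewise intertwining and stability argument followed by a perturbation to produce a single global unitary. The genuinely hard step in such a direct approach is passing from pointwise fibre-by-fibre unitary agreement to one global approximately conjugating unitary in $\mc{Z}$; this is precisely the task that Robert's theorem handles cleanly.
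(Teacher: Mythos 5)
Your proposal is correct and follows essentially the same route as the paper: both reduce the lemma to Robert's classification theorem for morphisms out of $1$-dimensional NCCW complexes into stable rank one targets, and both observe that trace-preservation (plus unitality and the projectionlessness of $\mc{Z}_{p,q}$) forces the classifying invariant of $\varphi$ and $\psi$ to agree. The only cosmetic difference is that you phrase the invariant via the Cuntz semigroup while the paper quotes the special case directly in terms of the induced maps on traces, which amounts to the same data here.
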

\begin{proof}
This is an immediate consequence of 
Robert's  \cite[Theorem 1.0.1]{robert_classification_2012} once we make some observations.  Theorem 1.0.1 proves a result about algebras that are noncommutative CW (NCCW) complexes and ones which have stable rank one.
Dimension-drop algebras are examples of NCCW complexes. In order for an algebra to be stable rank one, the invertible elements of that algebra must be dense. We shall check the assumptions of Robert's  theorem hold for $\cZ$. 
Every invertible  in $\mathcal{Z}_{p,q}$ is a continuous function from $[0,1]$ into  the set of invertible elements of 
$M_{pq}(\mathbb C)$. Since invertible elements are dense in $M_{pq}(\bbC)$, 
it is an exercise in topology of $[0,1]$ to show  that the invertible elements are dense in $\mathcal{Z}_{p,q}$. 
Since every unitary in $M_{pq}(\bbC)$ is of the form $\exp(i a)$ for a self-adjoint $a$, 
a similar exercise shows  that every unitary in $\cZ_{p,q}$ is of the form $\exp(i a)$ for a self-adjoint $a$ and 
in particular that the unitary group of $\mathcal{Z}_{p,q}$ is connected.  
This shows that the group $K_1$ of $\cZ_{p,q}$  is trivial (this is the only fact about $K_1$ that we will need; we refer the reader to \cite{RoLaLa:Introduction} for more information). 
In particular $\cZ$ is an inductive limit of NCCW complexes with trivial $K_1$ and is stable rank one. 

Since prime dimension drop algebras are projectionless so is their inductive limit,~$\cZ$. 
Additionally, $\cZ$ has a unique trace $\tau$ and two positive elements $a$ and $b$ in $\cZ$ are approximately unitarily equivalent 
if and only if $\tau(a^n)=\tau(b^n)$ for all $n$. 

A very special case of Robert's theorem \cite[Theorem 1.0.1]{robert_classification_2012} implies 
that if~$A$ has stable rank 1 and 
$B$ is an inductive limit of NCCW complexes with trivial $K_1$,  unique trace, and the  above property of $\cZ$, 
then the following hold (see \S\ref{Section:AF} for the definition of $\Phi_*$). 
\begin{enumerate}
\item For every trace $\sigma$ of $A$ there is a unital *-homomorphism $\phi\colon A\to B$ such that 
$\phi_*(\tau)=\sigma$. 
\item Two homomorphism $\varphi,\psi\colon  A\to B$ 
are approximately unitarily equivalent if and only if $\varphi_*(\tau)=\psi_*(\tau)$. 
\end{enumerate}
The lemma now follows.
\end{proof}

We can now prove the main result.

\begin{thm}
\label{thm:JiangSuFraisse} The Jiang-Su algebra $\mathcal{Z}$ with its
distinguished trace is the Fra\"iss\'e limit of the Fra\"iss\'e class $\mc{K}$.
\end{thm}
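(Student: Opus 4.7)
The plan is to invoke Theorem \ref{thm:FraisseClassEquivalence}, which reduces the theorem to exhibiting a separable $\mc{K}$-structure that is $\mc{K}$-universal and approximately $\mc{K}$-homogeneous; I will verify these three properties for $\mc{Z}$ (equipped with its unique trace), which, together with Theorem \ref{lem:FraisseLimitsExist}, will yield both that $\mc{K}$ is a Fra\"iss\'e class and that $\mc{Z}$ is its unique limit.

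First, separability of $\mc{Z}$ is standard, and the fact that $\mc{Z}$ is a $\mc{K}$-structure, as well as $\mc{K}$-universality, is immediate from Lemma~\ref{lem:JiangSuProperties}: every $(A,\tau) \in \mc{K}$ embeds into $\mc{Z}$ in a trace-preserving way, and $\mc{Z}$ is locally approximated in the appropriate sense by a chain of members of $\mc{K}$. (The joint embedding property itself, used in setting up the pseudo-metric on $\mc{K}_n$, comes from Lemma~\ref{lem:KhasJEP}.)

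The heart of the proof is approximate $\mc{K}$-homogeneity, and here Lemma~\ref{lem:ZRoberts} does the work. Given $A,B\subseteq \mc{Z}$ in $\mc{K}$ with distinguished generators $\bar a$ and $\bar b$ and an isomorphism $\alpha\colon A\to B$ (of $\mc{K}$-structures, hence trace-preserving and sending $\bar a$ to $\bar b$), consider the two maps $\phi,\psi\colon A\to \mc{Z}$ where $\phi$ is the inclusion and $\psi$ is the inclusion of $B$ composed with $\alpha$. Both are trace-preserving embeddings of $A\in\mc{K}$ into $\mc{Z}$, because the traces on $A$ and $B$ agree with the restriction of the trace of $\mc{Z}$, and $\alpha$ preserves traces by definition of a morphism in $\mc{K}$. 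Lemma~\ref{lem:ZRoberts} then supplies, for any $\epsilon>0$, a unitary $u\in \mc{Z}$ with
\[
\|u\,\phi(\bar a)\,u^{\ast} - \psi(\bar a)\| < \epsilon,
\]
i.e.\ $\|u\bar a u^\ast - \bar b\| < \epsilon$. Setting $\sigma = \mathrm{Ad}(u^\ast)$ gives an (inner) automorphism of $\mc{Z}$ with $d(\bar a,\sigma(\bar b))<\epsilon$, as required.

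Putting the three ingredients together, Theorem~\ref{thm:FraisseClassEquivalence} yields that $\mc{K}$ is a Fra\"iss\'e class and that $\mc{Z}$ is a Fra\"iss\'e limit; uniqueness from Theorem~\ref{lem:FraisseLimitsExist} then identifies $\mc{Z}$ as \emph{the} Fra\"iss\'e limit. The main obstacle is the homogeneity step, but the heavy lifting there has been offloaded to Robert's classification theorem through Lemma~\ref{lem:ZRoberts}; the one subtlety to double-check is simply that the diffuseness of the traces on the $A$'s in $\mc{K}$ (which is needed to place $\psi\circ\alpha$ in the scope of Lemma~\ref{lem:ZRoberts}) is preserved by isomorphisms in $\mc{K}$, which is automatic since $\alpha$ is trace-preserving.
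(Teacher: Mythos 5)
Your proposal is correct, but it routes the argument differently from the paper. The paper certifies the Fra\"iss\'e-class axioms for $\mc{K}$ directly: JEP from Lemma~\ref{lem:KhasJEP}, WPP from the fact that every member embeds into the separable $\mc{Z}$, and NAP by a concrete construction --- joint-embed $B$ and $C$ into some $D\in\mc{K}$, use Lemma~\ref{lem:JiangSuProperties} to realize $\mc{Z}$ as the limit of a chain $\langle D_n\rangle$ beginning at $D$, apply Lemma~\ref{lem:ZRoberts} to produce a unitary $u\in\mc{Z}$ approximately intertwining the two images of $A$, and then approximate $u$ by a unitary $u'$ in some finite stage $D_n$, which serves as the near amalgam; only then does it conclude that $\mc{Z}$ is the limit, ``automatically'' by Lemma~\ref{lem:ZRoberts}. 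You instead verify that $\mc{Z}$ is a separable, $\mc{K}$-universal, approximately $\mc{K}$-homogeneous $\mc{K}$-structure and invoke the implication $(2)\Rightarrow(1)$ of Theorem~\ref{thm:FraisseClassEquivalence} to obtain NAP and WPP for free, with uniqueness from Theorem~\ref{lem:FraisseLimitsExist} identifying $\mc{Z}$ as the limit. Both routes rest on exactly the same three lemmas, and your homogeneity step (comparing the inclusion of $A$ with the inclusion of $B$ precomposed with the isomorphism, then taking $\sigma=\mathrm{Ad}(u^{\ast})$) is exactly how Lemma~\ref{lem:ZRoberts} is meant to be used. What your route buys is brevity: the explicit near-amalgamation argument, including the appeal to definability of unitaries to pull $u$ back into a finite stage, is absorbed into the abstract equivalence theorem (which the paper states with proof deferred to Ben Yaacov). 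What the paper's route buys is an explicit near-amalgam inside the class itself, namely a dimension-drop algebra $D_n$, rather than an existence statement filtered through the general machinery.
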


\begin{proof}
This is automatic by Lemma \ref{lem:ZRoberts} if we can see that $\mc{K}$ is a Fra\"iss\'e class.  Lemma \ref{lem:KhasJEP} directly shows that $\mc{K}$ has the joint embedding property.  Since every element of $\cK$ embeds into $\mc{Z}$, $\mc{K}$ has the weak Polish property.  We are left to show that $\mc{K}$ satisfies the near amalgamation property.  Towards this end, suppose that $A, B$ and $C$ are in $\mc{K}$ and that 
$\varphi:A \rightarrow B$ and $\psi:A \rightarrow C$.  By Lemma \ref{lem:KhasJEP}, we can choose $D \in \mc{K}$ and maps $\varphi':B \rightarrow D$ and $\psi':C \rightarrow D$.  Now by Lemma \ref{lem:JiangSuProperties}, we can assume that $\mc{Z}$ is an inductive limit of $D_n$ from $\mc{K}$ such that $D_0 = D$.  So resetting the notation, we have maps $\varphi,\psi$ from $A$ into $D$ and $D$ begins an inductive chain $\langle D_n : n \in \bbN \rangle$ leading to $\mc{Z}$.  By Lemma \ref{lem:ZRoberts}, for a fixed $\epsilon > 0$ there is a unitary $u \in Z$ such that
\[
\| (\mathrm{Ad}(u) \circ \varphi)(\bar a) - \psi(\bar a) \| < \epsilon/3
\]
where $\bar a$ are generators for $A$.  By the definability of unitaries, there is some $n \in \bbN$ and some unitary $u' \in D_n$ so that $\|u - u'\| < \epsilon/3$.  $D_n$ will now work as the near amalgam of $\varphi$ and $\psi$.
\end{proof}

\begin{rmk}
Although this proof shows that the Jiang-Su algebra is a  Fra\"iss\'e limit, it is a bit unsatisfactory in that it uses the existence of the algebra itself to establish the key properties of the  Fra\"iss\'e class.  Additionally, it relies heavily on \cite{robert_classification_2012} in order to prove near amalgamation.  
In an earlier version of the present paper we asked whether there was a self-contained proof that K is a Fraisse class. Such a proof was found by Masumoto in \cite{Masumoto}.
\end{rmk}

\section{L\'{e}vy automorphism groups}

\label{Section:Levy}

A Polish group $G$ is \emph{extremely amenable }if every continuous action
of $G$ on a compact space has a fixed point (see~\cite{pestov_dynamics_2006}). 
Suppose that $\left(
H_{n},d_{n}\right) _{n\in \mathbb{N}}$ is a sequence of compact metric
groups equipped with their normalized Haar measures~$\mu_{H_n}$. 
 The sequence $\left( H_{n}\right)
_{n\in \mathbb{N}}$ has the \emph{L\'{e}vy concentration property} if for
any sequence $A_{n}\subset H_n$ of Borel subsets such that $\liminf_{n}\mu
_{H_{n}}(A_{n})>0$ and for every $\varepsilon >0$%
\begin{equation*}
\lim_{n\rightarrow \infty }\mu _{H_{n}}\left\{ x\in H_n:\exists a\in A_{n},d\left(
a,x\right) \leq \varepsilon \right\} =1\text{;}
\end{equation*}%
see also \cite[Definition 1.2.6 and Remark 1.2.9]{pestov_dynamics_2006}. 
A Polish group is \emph{L\'{e}vy} if it admits an increasing sequence $\left(H_{n}\right) _{n\in \mathbb{N}}$ of compact subgroups with dense union with
the L\'{e}vy concentration property with respect to the metrics induced by a
compatible metric on $G$. 
 Every L\'{e}vy group is extremely amenable \cite[Theorem 4.1.3]%
{pestov_dynamics_2006}.

If $M$ is a II$_{1}$ factor then the automorphism group $\mathrm{Aut}(M)$ of 
$M$ is a Polish group with respect to the topology of pointwise convergence
in trace-norm. Similarly if $A$ is a separable C*-algebra then the
automorphism group $\mathrm{Aut}(A)$ of $A$ is a Polish group with respect
to the topology of pointwise convergence in norm.

Let  $U_n$ denote the unitary group of $M_n(\bbC)$. It can be naturally identified with 
a subgroup of the unitary 
group of both the hyperfinite II$_1$ factor $\cR$, as well as of the unitary  group of 
  a UHF algebra whose supernatural number is divisible by $n$. 
  The metrics induced by these embeddings correspond to the trace-norm and to the operator norm, respectively. 
  The groups $\SU_n=\{u\in U_n: \det(u)=1\}$ form 
  a L\'evy sequence with respect to either metric (\cite[Theorem~4.1.14]{pestov_dynamics_2006}).
We note that all automorphisms of $M_n(\bbC)$ are inner and that 
 $\Aut(M_n(\bbC))$ is naturally isomorphic to $\SU_n$, via $u\mapsto \Ad u$.

 The proof of the first two parts of the following Proposition are well-known
(cf. \cite{giordano_extremely_2002}), but for the convenience of the reader we include outlines of their proofs, as well as a more detailed proof of the third claim.

\begin{prop}
\label{Proposition:Levy}The automorphism groups of

\begin{enumerate}
\item the hyperfinite II$_{1}$ factor,

\item UHF algebras, and

\item the AF algebras obtained in\ Theorem \ref{Theorem:FraisseLimitAF}
\end{enumerate}

are L\'{e}vy and, in particular, extremely amenable.
\end{prop}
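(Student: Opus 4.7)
The plan is to exhibit, in each of the three cases, an increasing sequence $(H_n)$ of compact subgroups of $\Aut(A)$ whose union is dense and which satisfies the L\'evy concentration property in the natural Polish metric on $\Aut(A)$. Throughout, let $A$ denote the given Fra\"iss\'e limit, and fix a chain $A_1 \subseteq A_2 \subseteq \cdots$ of finite-dimensional substructures with $\bigcup_k A_k$ dense in $A$ (trace-norm dense in case (1), operator-norm dense in cases (2) and (3)). Set $H_n$ to be the image of the unitary group $U(A_n)$ in $\Aut(A)$ under $u \mapsto \Ad u$. Since this map is continuous with kernel the (toral) center of $U(A_n)$, each $H_n$ is compact, and topologically $H_n$ is a quotient of $\SU_{k_{n,1}} \times \cdots \times \SU_{k_{n,w_n}}$, where $w_n$ is the width of $A_n$ and the $k_{n,i}$ are the sizes of its matrix summands.

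For the L\'evy property, the bi-invariant metric induced on $H_n$ by its inclusion into $\Aut(A)$ coincides up to a universal constant with the operator-norm metric (cases (2) and (3)) or the trace-norm metric (case (1)) on $\SU_{k_{n,1}} \times \cdots \times \SU_{k_{n,w_n}}$. The sequences $(\SU_k)_{k \in \bbN}$ with either metric are L\'evy by \cite[Theorem 4.1.14]{pestov_dynamics_2006}, which handles cases (1) and (2) directly since there the width is $1$. In case (3) the width is bounded by the fixed integer $n$, and a product of a uniformly bounded number of L\'evy sequences remains L\'evy, because the concentration function of a product is controlled by the sum of those of the factors.

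The substantive step is density of $\bigcup_n H_n$ in $\Aut(A)$. Given $\alpha \in \Aut(A)$, a finite tuple $\bar a \subseteq A_m$, and $\epsilon > 0$, use density of $\bigcup_k A_k$ to find $k \geq m$ and $\bar b \in A_k$ with $\|\bar b - \alpha(\bar a)\| < \epsilon/2$. Since $\alpha$ is an automorphism, $\langle \bar a \rangle$ and $\langle \bar b \rangle$ are (approximately) isomorphic substructures sitting inside $A_k$. In cases (1) and (2) the ambient $A_k$ is a matrix algebra, and the Skolem-Noether theorem (any two unital embeddings of a finite-dimensional C*-algebra into a matrix algebra are unitarily conjugate) yields a unitary $u \in A_k$ with $\|u \bar a u^* - \bar b\| < \epsilon/2$, so $\Ad u \in H_k$ lies within $\epsilon$ of $\alpha$ on $\bar a$. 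In case (3) the two embeddings of $\langle \bar a \rangle$ into $A_k$ may have inequivalent Bratteli diagrams inside $A_k$, so no unitary in $A_k$ itself need suffice; one then amalgamates the two embeddings inside $\mc{K}(n, \tau)$ via Proposition \ref{prop:FixedWidthAP}, and uses approximate $\mc{K}(n,\tau)$-homogeneity of $A$ to realize this amalgam inside some $A_{k'}$ with $k' \geq k$, producing the required unitary in $H_{k'}$.

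The main obstacle will be the density step in case (3), where one has to convert the abstract amalgamation of Proposition \ref{prop:FixedWidthAP} into an actual unitary conjugation inside some $A_{k'} \subseteq A$. The critical input is that every automorphism of the limit preserves its unique trace (Theorem \ref{Theorem:FraisseLimitAF}), so the two embeddings of $\langle \bar a \rangle$ into $A_k$ induce the same trace on $\langle \bar a \rangle$ and therefore satisfy the hypotheses of trace-preserving amalgamation in $\mc{K}(n, \tau)$. Once one has an amalgam $D \in \mc{K}(n,\tau)$ through which both embeddings factor, universality and approximate homogeneity of $A$ place $D$ inside some $A_{k'}$ in a way respecting the original positions of $\bar a$ and $\bar b$ up to $\epsilon$; the unitary conjugating the two copies inside $D$ then acts on $A$ as the required element of $H_{k'}$. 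For cases (1) and (2) this density step recovers, via the Fra\"iss\'e perspective, the classical argument of Giordano and Pestov \cite{giordano_extremely_2002}.
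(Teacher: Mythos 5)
Your overall architecture coincides with the paper's: the same compact subgroups $H_n=\{\Ad u: u\in U(A_n)\}$, the same appeal to \cite[Theorem 4.1.14]{pestov_dynamics_2006} for concentration, and for (1) and (2) essentially the standard density argument. Two smaller points first. The statement ``any two unital embeddings of a finite-dimensional C*-algebra into a matrix algebra are unitarily conjugate'' is false for non-simple algebras (e.g.\ the two embeddings of $\bbC\oplus\bbC$ into $M_3(\bbC)$ with multiplicities $(1,2)$ and $(2,1)$); you must either take $\bar a$ to generate the full matrix algebra $A_m$ itself, or match multiplicities using trace-preservation. And in case (3) the assertion that $\prod_{i\le n}\SU_{k(i)}$ forms a L\'evy sequence requires that \emph{each} coordinate sequence be L\'evy, i.e.\ that the sizes of all $n$ summands of $A_k$ tend to infinity with $k$; you do not verify this, whereas the paper explicitly notes that $\lim_k\min_i k(i)=\infty$ follows from an inspection of the proof of Theorem \ref{Theorem:FraisseLimitAF}.

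The genuine gap is the density step in case (3). Proposition \ref{prop:FixedWidthAP} gives an amalgam $D$ with embeddings $\psi_0,\psi_1: A_k\to D$ satisfying $\psi_0\iota=\psi_1\iota'$, and $D$ can be placed inside some $A_{k'}$; but to extract a unitary in $A_{k'}$ conjugating $\iota(\bar a)$ to $\iota'(\bar a)$ you must identify the two resulting embeddings of $A_k$ into $A_{k'}$ with the canonical inclusion $A_k\subseteq A_{k'}$ up to unitary conjugacy. For width $n\ge 2$ this is not automatic: a trace-preserving unital embedding between width-$n$ algebras is \emph{not} determined up to unitary equivalence by the trace alone, since the trace only constrains a weighted sum of the multiplicities, not the full Bratteli diagram. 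Moreover, approximate $\mc{K}(n,\tau)$-homogeneity of the limit produces an \emph{automorphism} of $A$, not a unitary in some finite stage, so invoking it to manufacture the inner automorphism is circular --- approximate innerness is precisely what is being proved. What your argument is missing is a uniqueness (intertwining) statement: any two trace-preserving unital embeddings $B\to C$ in $\mc{K}(n,\tau)$ become unitarily equivalent after composing with a further embedding $C\to C'$. The paper obtains exactly this, wholesale, from Elliott's classification of AF algebras: since $K_0(A)$ is linearly ordered, every automorphism of $A$ is approximately inner \cite[Theorem IV.4.3]{Davidson1996}. You need either that citation or a direct multiplicity/intertwining argument in its place.
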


\begin{proof} By the above, for $n\in \bbN$ the group  
$
H_n:=\{\Ad u:u\in \SU_{n}\}
$
can be identified with a compact subgroup of $\Aut(\cR)$. 
These groups have the L\'evy approximation property, and 
since    $\bigcup_{n\in \mathbb{N}} U_n$ is dense $U(\mathcal{R})$
and all automorphisms of $\cR$ are approximately inner, $\bigcup_n H_n$
is dense in $\Aut(\cR)$.  Therefore (1) follows. 

The proof of (2) is identical, although the groups $H_n$ are now considered with a different metric.

In order to prove (3), fix $m\geq 2$ and let  $A$ be one of the AF algebras constructed in
Theorem \ref{Theorem:FraisseLimitAF} with Bratteli diagram of width $m$. Since $K_{0}\left( A\right) $ is
linearly ordered, all automorphisms of $A$ are approximately inner by  Elliott's classification 
of AF algebras \cite[Theorem IV.4.3]{Davidson1996}. 
Writing $A$ as an inductive limit of finite-dimensional algebras $A_n$, 
we represent $U(A)$ as an inductive limit of $U(A_n)$ and 
$\Aut (A)$ as an inductive limit of $\Aut (A_n)$. 
The algebra $A_n$ is a direct sum of matrix algebras $M_{k(i)}(\bbC)$ for $1\leq i\leq m$
and therefore $\Aut(A_n)\cong \prod_{i\leq m}\SU_{k(i)}$. 
An inspection of the proof of Theorem~\ref{Theorem:FraisseLimitAF}
shows that $\lim_n \min_{i\leq m} k(i)=\infty$. It is now an easy exercise to show  that  
the sequence $\Aut(A_n)$ has the L\'evy property, and (3) follows. 
\end{proof}

\section{A Ramsey theorem for matrix algebras}

\label{Section:Ramsey}

In this section we deduce from Proposition \ref{Proposition:Levy}
Ramsey-type results for matrix algebras. We will use the correspondence
between extreme amenability of a Fra\"iss\'e limit and the Ramsey property
of the corresponding Fra\"iss\'e class established in \cite[Theorem 3.10]%
{Melleray2014} building on a previous results in the discrete case from \cite%
{kechris_fraisse_2005}.

Suppose that $\mathcal{K}$ is a Fra\"{\i}ss\'{e} class in the sense of
Definition \ref{Definition:Fraisse}. If $A,B$ are elements of $\mathcal{K}$
with distinguished set of generators $\bar{a}$ for $A$, denote by $^{A}B$
space of embeddings of $A$ inside $B$ endowed with the metric%
\begin{equation*}
\rho _{\bar{a}}\left( \varphi ,\psi \right) =\max_{i}d\left( \varphi
(a_{i}),\psi (a_{i})\right) \text{.}
\end{equation*}%
A \emph{coloring }of $^{A}B$ is a $1$-Lipschitz map $\gamma
:{}^{A}B\rightarrow \left[ 0,1\right] $.

Suppose that $\mathcal{K}$ satisfies the property that $^{A}B$ is compact for every $A, B \in \mathcal{K}$.
In this case, the definition of 
the \emph{approximate Ramsey property } (\cite{Melleray2014}, Def. 3.3) is equivalent to:  for
every $A,B\in \mathcal{K}$ and 
every $\varepsilon >0$, there is $%
C\in \mathcal{K}$ such that for any coloring $\gamma $ of $^{A}C$ there is $%
\beta \in {}^{B}C$  such that $\gamma(\beta \circ -)$ varies by at most $\varepsilon$ on~${}^{A}B$.

In \cite{Melleray2014}, a version of the following is proved as Proposition 3.4.
\begin{prop}\label{Mel3.4} Suppose that $\mathcal{K}$ is a Fra\"iss\'e class with limit $M$ and for all $A, B \in \mathcal{K}$, ${}^{A}B$ is compact then the  following are equivalent:
\begin{enumerate}
\item $\mathcal{K}$ has the approximate Ramsey property.
\item For every $A, B \in \mathcal{K}$, $\varepsilon >0$, and every coloring $\gamma$ of ${}^{A}M$, there is $\beta \in {}^{B}M$ such that 
$\gamma(\beta \circ -)$ varies by at most $\varepsilon$ on ${}^{A}B$; we say $M$ has the approximate Ramsey property.
\end{enumerate}
\end{prop}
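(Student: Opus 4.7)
The plan is to reproduce the standard correspondence between the approximate Ramsey property of a Fra\"iss\'e class and the ``structural'' Ramsey property of its limit (cf.\ \cite{kechris_fraisse_2005, Melleray2014}), making only minor adjustments to handle the compactness assumption cleanly and the definability of embeddings in our setting.

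The direction $(1) \Rightarrow (2)$ is routine. Given $A, B \in \mc{K}$, $\varepsilon > 0$, and a $1$-Lipschitz coloring $\gamma : {}^A M \to [0,1]$, I would apply $(1)$ to obtain $C \in \mc{K}$ witnessing the approximate Ramsey property for $A, B, \varepsilon$, fix an embedding $\iota : C \to M$ (which exists by $\mc{K}$-universality of $M$), and consider the pulled-back coloring $\gamma_\iota(\varphi) := \gamma(\iota \circ \varphi)$ on ${}^A C$, which is again $1$-Lipschitz. Applying the Ramsey property produces $\beta \in {}^B C$ with $\gamma_\iota(\beta \circ -)$ of variation at most $\varepsilon$ on ${}^A B$; then $\iota \circ \beta \in {}^B M$ works for $\gamma$.

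The direction $(2) \Rightarrow (1)$ is the substantial one, and I would argue by contradiction. Suppose $(1)$ fails: there are $A, B \in \mc{K}$ and $\varepsilon_0 > 0$ such that for every $C \in \mc{K}$ some $1$-Lipschitz coloring $\gamma_C : {}^A C \to [0,1]$ admits no $\beta \in {}^B C$ of variation at most $\varepsilon_0$ on ${}^A B$. Writing $M$ as the closure of an increasing union $\bigcup_n C_n$ of members of $\mc{K}$, apply the McShane--Whitney extension theorem to obtain $1$-Lipschitz extensions $\tilde{\gamma}_n : {}^A M \to [0,1]$ of the bad colorings $\gamma_n := \gamma_{C_n}$. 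Since the space of $[0,1]$-valued $1$-Lipschitz functions on ${}^A M$ is compact in the topology of pointwise convergence (a Tychonoff-type argument), extract a subsequential limit $\gamma$, which is again a $1$-Lipschitz coloring of ${}^A M$. I would then show that $\gamma$ contradicts $(2)$ at level, say, $\varepsilon_0/2$: given any $\beta \in {}^B M$ with $\gamma(\beta \circ -)$ of variation at most $\varepsilon_0/2$ on ${}^A B$, use approximate $\mc{K}$-homogeneity of $M$ (together with the fact that $B$ is finitely generated) to approximate $\beta$ arbitrarily well by embeddings $\beta_k : B \to C_{n_k}$, and then invoke compactness of ${}^A B$ together with the uniform $1$-Lipschitz bound on the $\gamma_{n_k}$ to conclude that $\gamma_{n_k}(\beta_k \circ -)$ has variation at most $\varepsilon_0$ on ${}^A B$ for large $k$, contradicting the badness of $\gamma_{n_k}$.

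The technical heart of the argument, and what I expect to be the main obstacle, is the approximation of an embedding $\beta : B \to M$ by embeddings $\beta_k : B \to C_{n_k}$. This uses the construction of $M$ as a Fra\"iss\'e limit in an essential way, together with standard perturbation arguments that upgrade approximate *-homomorphisms to genuine embeddings in our C*-algebraic context. The compactness hypothesis on ${}^A B$ is exactly what allows the pointwise convergence $\tilde{\gamma}_{n_k} \to \gamma$, combined with equicontinuity, to translate into uniform estimates for the variation along ${}^A B$; without it, the subsequential limit $\gamma$ need not detect the bad behavior of the $\gamma_n$ on ${}^A B$.
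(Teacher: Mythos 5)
The paper does not actually prove this proposition: it is stated with the attribution ``a version of the following is proved as Proposition 3.4 of \cite{Melleray2014}'', so there is no in-paper argument to compare against. Your outline is the standard one behind that result, and its skeleton is sound: the direction $(1)\Rightarrow(2)$ by pulling the coloring back along an embedding $\iota\colon C\to M$ (note the pullback is $1$-Lipschitz precisely because $\iota$ is isometric, so the metrics on ${}^{A}C$ computed in $C$ and in $M$ agree), and the direction $(2)\Rightarrow(1)$ by contradiction, extending the bad colorings $\gamma_n$ on ${}^{A}C_n$ to $1$-Lipschitz colorings of ${}^{A}M$, extracting a pointwise limit from the compact space of $1$-Lipschitz $[0,1]$-valued functions, and using that pointwise convergence of an equi-Lipschitz family is uniform on the compact set $\{\beta\circ\alpha:\alpha\in{}^{A}B\}$. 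This is exactly where the compactness hypothesis enters, as you say.

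There is, however, one step whose justification as written I do not accept: the approximation of an arbitrary $\beta\in{}^{B}M$ by genuine embeddings $\beta_k\colon B\to C_{n_k}$. You propose to get this from approximate $\mc{K}$-homogeneity ``together with standard perturbation arguments that upgrade approximate *-homomorphisms to genuine embeddings.'' Such perturbation arguments amount to weak stability of the relations defining the generators of $B$, which is not available for general C*-algebras and is in any case not the mechanism at work here; moreover approximate $\mc{K}$-homogeneity concerns extending isomorphisms between copies of members of $\mc{K}$ inside $M$ to automorphisms, and by itself does not produce a copy of $B$ inside some $C_n$ near $\beta(B)$. The correct source of this density is the construction of the limit itself: one writes $M=\overline{\bigcup_n C_n}$ along a Fra\"iss\'e (generic) sequence, for which the property that $\bigcup_n {}^{B}C_n$ is dense in ${}^{B}M$ (in the metric $\rho_{\bar b}$ on the generators, which by compactness of ${}^{A}B$ upgrades to uniform closeness on $\{\alpha(\bar a):\alpha\in{}^{A}B\}$) is part of the standard machinery of \cite{BenYaacov2013}. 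With that step repaired by the appropriate citation rather than a perturbation argument, your proof goes through and coincides with the argument of \cite[Proposition 3.4]{Melleray2014}.
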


The following result can be proved with
the same methods as \cite[Theorem 3.10]{Melleray2014}.

\begin{thm}
\label{Theorem: extremely amenable} Suppose that $M$ is the limit of a Fra\"iss\'e class $\mathcal{K}$. The following statements are equivalent:
\begin{enumerate}
\item $\mathrm{Aut}(M)$ is extremely amenable.
\item $\mathcal{K}$ has the approximate Ramsey property.
\end{enumerate}
\end{thm}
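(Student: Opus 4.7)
This is the metric form of the Kechris-Pestov-Todorcevic correspondence, and I would follow the proof of Melleray's Theorem 3.10 almost verbatim, using Proposition \ref{Mel3.4} to replace the Ramsey property for $\mc{K}$ by its equivalent formulation at the limit $M$.

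For $(1) \Rightarrow (2)$: fix $A, B \in \mc{K}$, a $1$-Lipschitz coloring $\gamma : {}^AM \to [0,1]$, and $\varepsilon > 0$. Let $Z \subseteq [0,1]^{{}^AM}$ be the subspace of $1$-Lipschitz maps equipped with the product topology; this is compact by Tychonoff together with the closedness of the $1$-Lipschitz condition under pointwise limits. The group $G := \Aut(M)$ acts on $Z$ by $(g \cdot c)(\phi) := c(g^{-1}\circ\phi)$, and the only nontrivial check for joint continuity of this action is to bound $|c_\alpha(g_\alpha^{-1}\phi) - c_0(g_0^{-1}\phi)|$ at a convergent pair $(g_\alpha, c_\alpha) \to (g_0, c_0)$, which splits via the triangle inequality into a piece controlled by the $1$-Lipschitz property of $c_\alpha$ and continuity of $g \mapsto g^{-1}\phi$, plus a pointwise-convergence piece. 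Extreme amenability applied to the compact $G$-invariant set $\overline{G \cdot \gamma} \subseteq Z$ yields a $G$-fixed $\gamma^*$ in this closure. Since pointwise limits of $1$-Lipschitz maps are $1$-Lipschitz, and since approximate $\mc{K}$-homogeneity forces every $G$-orbit in ${}^AM$ to be dense, $\gamma^*$ is constant, say $\equiv c^*$. Fixing $\iota \in {}^BM$ together with a finite $\delta$-dense subset $\{\phi_1,\ldots,\phi_n\} \subseteq {}^AB$ (using compactness of ${}^AB$) with $\delta < \varepsilon/4$, the closure relation gives $g \in G$ with $|\gamma(g^{-1}\iota\phi_i) - c^*| < \delta$ for each $i$. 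Setting $\beta := g^{-1}\iota$, the $1$-Lipschitz property of $\gamma \circ \beta_*$ combined with the $\delta$-density yields $|\gamma(\beta\phi) - c^*| < 2\delta$ for every $\phi \in {}^AB$, so the oscillation of $\gamma \circ \beta_*$ is at most $4\delta < \varepsilon$.

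For $(2) \Rightarrow (1)$: I would apply the standard oscillation-stability characterization of extreme amenability (cf.\ \cite{pestov_dynamics_2006}): $G$ is extremely amenable if and only if every bounded left-uniformly continuous function on $G$ is finitely oscillation stable. Given such $f$, a finite $F \subseteq G$, and $\varepsilon > 0$, first approximate $f$ uniformly to within $\varepsilon/4$ by a function of the form $g \mapsto \gamma(g\iota_A)$ with $A \in \mc{K}$, $\iota_A \in {}^AM$, and $\gamma : {}^AM \to [0,1]$ a $1$-Lipschitz coloring; this is possible because $M$ is a $\mc{K}$-structure and the left uniformity on $G$ is generated by the pseudo-metrics $(g, h) \mapsto \max_{a \in S} d(g(a), h(a))$ for finite $S \subseteq M$. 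Next, using that $M$ is a $\mc{K}$-structure together with JEP, find $B \in \mc{K}$, an embedding $\kappa \in {}^BM$, and $\phi_k \in {}^AB$ for each $k \in F$ such that $\kappa \circ \phi_k$ is $\delta$-close to $k \circ \iota_A$ in ${}^AM$. Apply the approximate Ramsey property to $(A, B, \gamma, \varepsilon/4)$ to get $\beta \in {}^BM$ with $\gamma \circ \beta_*$ of oscillation at most $\varepsilon/4$ on ${}^AB$, and use approximate $\mc{K}$-homogeneity to find $h \in G$ with $d(h\kappa, \beta) < \delta$ in ${}^BM$. Propagating the errors through the $1$-Lipschitz $\gamma$ and the uniform approximation of $f$ yields the required oscillation bound $< \varepsilon$ on $\{f(k g) : k \in F\}$ at an appropriate $g$ built from $h$.

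The main obstacle is the error bookkeeping in $(2) \Rightarrow (1)$, particularly the density assertion that $\mc{K}$-evaluation functions are uniformly dense in the appropriate subspace of uniformly continuous functions on $G$ and the care needed in matching left versus right $G$-translations when using approximate homogeneity to relate $\beta$ with $h\kappa$. The remaining ingredients — joint continuity of the $G$-action on $Z$, density of $G$-orbits in ${}^AM$ via approximate $\mc{K}$-homogeneity, and the Tychonoff compactness of $Z$ — are routine.
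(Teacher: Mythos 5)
Your proposal is correct and follows the same route as the paper, which proves this theorem simply by invoking the methods of Melleray--Tsankov's Theorem 3.10 (the metric Kechris--Pestov--Todorcevic correspondence): a fixed point in the compact space of $1$-Lipschitz colorings of ${}^AM$ for one direction, and the oscillation-stability characterization of extreme amenability for the other. Your error bookkeeping and the use of Proposition~\ref{Mel3.4} to pass between the class-level and limit-level Ramsey properties match what the cited argument requires.
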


Suppose that $B$ is a unital subalgebra of the hyperfinite II$_1$ factor $\mathcal{R}$. Endow the space 
${}^{M_{k}(\mathbb{C})}B$ of unital embeddings of $M_{k}(\mathbb{C})$ into $B$ with the
metric%
\begin{equation*}
d_{2}\left( \alpha ,\alpha ^{\prime }\right) =\sup_{\left\Vert x\right\Vert
\leq 1}\left\Vert \left( \alpha -\alpha ^{\prime }\right) (x)\right\Vert _{2}%
\text{.}
\end{equation*}

The following is an immediate corollary of Proposition \ref{Mel3.4}, Theorem \ref{Theorem: extremely
amenable} and the extreme amenability of $\mathrm{Aut}(\mathcal{R})$.

\begin{thm}
\label{Theorem:Ramsey-trace-1}
The class of matrix algebras equipped with the metric $d_2$ and its Fra\"iss\'e limit, $\mathcal{R}$, have the approximate Ramsey property.
\end{thm}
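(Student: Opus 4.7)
The plan is to derive the theorem from the Kechris--Pestov--Todor\v{c}evi\'c--Melleray correspondence between extreme amenability of the automorphism group of a Fra\"iss\'e limit and the approximate Ramsey property of the class, in the form of Theorem~\ref{Theorem: extremely amenable} and Proposition~\ref{Mel3.4}. All three required inputs are already in hand: the presentation of $\mathcal{R}$ as the Fra\"iss\'e limit of the class $\mathcal{K}$ of full matrix algebras regarded as finite factors (from Section~\ref{Section:AF}); the extreme amenability of $\mathrm{Aut}(\mathcal{R})$, supplied by Proposition~\ref{Proposition:Levy}(1); and the metric $d_2$ on embedding spaces, which coincides with the generator metric $\rho_{\bar a}$ from the definition of the approximate Ramsey property once $M_k(\mathbb{C})$ is taken with its standard matrix units as the distinguished generating tuple.

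The one substantive step I would carry out is the verification of the compactness hypothesis in Proposition~\ref{Mel3.4}: for every $M_k(\mathbb{C}), M_n(\mathbb{C}) \in \mathcal{K}$ the space ${}^{M_k(\mathbb{C})}M_n(\mathbb{C})$ of unital $*$-embeddings is compact with respect to $d_2$. If $k\nmid n$ the space is empty. If $k\mid n$, I would fix a reference unital embedding $\iota_0\colon M_k(\mathbb{C})\to M_n(\mathbb{C})$ and invoke the classification of unital embeddings of matrix algebras up to unitary equivalence to write every embedding as $\mathrm{Ad}(u)\circ\iota_0$ for some $u\in U(M_n(\mathbb{C}))$. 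The map $u\mapsto \mathrm{Ad}(u)\circ\iota_0$ is continuous (in fact Lipschitz) from the compact unitary group $U(M_n(\mathbb{C}))$ into $({}^{M_k(\mathbb{C})}M_n(\mathbb{C}),d_2)$, so the embedding space is compact as a continuous image of a compact set.

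With compactness in place, Proposition~\ref{Proposition:Levy}(1) feeds into Theorem~\ref{Theorem: extremely amenable} to give the approximate Ramsey property for $\mathcal{K}$, and then Proposition~\ref{Mel3.4} transfers this to the statement that $\mathcal{R}$ itself has the approximate Ramsey property. I do not foresee any serious obstacle. All the difficult content has already been extracted earlier in the paper: the L\'evy property of the increasing sequence of groups $\mathrm{SU}_n$ (ultimately Gromov's concentration theorem) and the density of inner automorphisms in $\mathrm{Aut}(\mathcal{R})$ together give the extreme amenability of $\mathrm{Aut}(\mathcal{R})$ in Proposition~\ref{Proposition:Levy}(1). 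What the present proof contributes beyond citation is only the compactness observation above, which is what allows the KPT--Melleray dictionary to be applied in the metric Fra\"iss\'e setting for C*-algebras.
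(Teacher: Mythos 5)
Your proposal follows exactly the route the paper takes: the theorem is stated there as an immediate corollary of Proposition~\ref{Mel3.4}, Theorem~\ref{Theorem: extremely amenable}, and the extreme amenability of $\mathrm{Aut}(\mathcal{R})$ from Proposition~\ref{Proposition:Levy}. Your only addition is the explicit verification that ${}^{M_k(\mathbb{C})}M_n(\mathbb{C})$ is compact (as a continuous image of the compact unitary group $U(M_n(\mathbb{C}))$ under $u\mapsto\mathrm{Ad}(u)\circ\iota_0$), which the paper leaves implicit and which is correct.
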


Using the extreme amenability of the automorphism groups of infinite type
UHF algebras one can obtain similar results for matrix algebras with respect
to the operator norm. If $q=\prod_{p}p^{n_{p}}$ for $n_{p}\in \left\{
0,\infty \right\} $, then we denote by $\mathbb{M}_{q}$ the infinite type
UHF algebras with associated supernatural number $q$. For $A\subset \mathbb{M%
}_{q}$ define $^{M_{k}(\mathbb{C})}A$ to be the set of embeddings of $M_{k}\left( 
\mathbb{C}\right) $ into $A$ endowed with the metric%
\begin{equation*}
d\left( \alpha ,\alpha ^{\prime }\right) =\sup_{\left\Vert x\right\Vert \leq
1}\left\Vert \left( \alpha -\alpha ^{\prime }\right) \left( x\right)
\right\Vert \text{.}
\end{equation*}%

\begin{thm}
For any supernatural number $q$, both $\mathbb{M}_q$ and its associated Fra\"iss\'e class have the approximate Ramsey property.
\end{thm}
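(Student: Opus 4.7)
The plan is to run the same machine that proved Theorem~\ref{Theorem:Ramsey-trace-1}, but with the operator norm in place of the $2$-norm. The three ingredients are: (i) $\mathbb{M}_{q}$ is a Fra\"iss\'e limit, (ii) its automorphism group is extremely amenable, and (iii) the spaces of embeddings between finite-dimensional pieces are compact so that Proposition~\ref{Mel3.4} and Theorem~\ref{Theorem: extremely amenable} apply.

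First I would identify the relevant Fra\"iss\'e class $\mathcal{K}_{q}$. By the proof of Theorem~\ref{thm:UHF}, $\mathbb{M}_{q}$ is the Fra\"iss\'e limit of the class of matrix algebras $M_{n}(\mathbb{C})$ such that $n$ divides~$q$ (equipped with the standard matrix unit generators and with unital $\ast$\nobreakdash-homomorphisms as morphisms). The underlying metric is the operator norm, which coincides with the metric~$d$ defined just before the statement. Next I would invoke Proposition~\ref{Proposition:Levy}(2), which yields that $\mathrm{Aut}(\mathbb{M}_{q})$ is L\'evy and hence extremely amenable.

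The main point to check is the compactness hypothesis of Proposition~\ref{Mel3.4}, namely that ${}^{A}B$ is compact for any $A,B\in\mathcal{K}_{q}$. Fix $A=M_{k}(\mathbb{C})$ and $B=M_{n}(\mathbb{C})$ with $k\mid n$; by the uniqueness up to unitary equivalence of unital embeddings of matrix algebras, any $\varphi\in{}^{A}B$ has the form $\varphi=\mathrm{Ad}(u)\circ\varphi_{0}$ for some fixed embedding $\varphi_{0}$ and some $u\in U_{n}$. The map $U_{n}\to{}^{A}B$, $u\mapsto \mathrm{Ad}(u)\circ\varphi_{0}$, is continuous and surjective, and $U_{n}$ is compact, so ${}^{A}B$ is compact in the metric~$d$. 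The same observation applies when $B$ is replaced by $\mathbb{M}_{q}$ if one restricts attention to embeddings whose image lies in a fixed finite stage $M_{n}(\mathbb{C})$; every $\varphi\in{}^{A}\mathbb{M}_{q}$ can be approximated arbitrarily well by such a ``finite-stage'' embedding since $\mathbb{M}_{q}$ is locally approximated by its matrix subalgebras (which is built into the definition of $\mathcal{K}_{q}$-structure).

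Putting the pieces together: the compactness just verified allows Proposition~\ref{Mel3.4} to relate the approximate Ramsey property of $\mathcal{K}_{q}$ to that of the limit $\mathbb{M}_{q}$; the extreme amenability of $\mathrm{Aut}(\mathbb{M}_{q})$ supplied by Proposition~\ref{Proposition:Levy}(2), combined with Theorem~\ref{Theorem: extremely amenable}, then gives the approximate Ramsey property of $\mathcal{K}_{q}$. Via Proposition~\ref{Mel3.4} this transfers back to $\mathbb{M}_{q}$ itself, proving both halves of the statement. The only mild obstacle I foresee is verifying the compactness of the embedding spaces with some care (in particular that the parametrization by unitaries is genuinely by a compact set and that the metric $d$ is the one obtained from this parametrization), but this is routine once one recalls that unital embeddings of matrix algebras are classified up to unitary conjugacy by their Bratteli multiplicities.
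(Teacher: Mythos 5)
Your proposal is correct and follows exactly the route the paper intends: the theorem is presented there as an immediate corollary of Proposition~\ref{Mel3.4}, Theorem~\ref{Theorem: extremely amenable}, the Fra\"iss\'e description of UHF algebras from Theorem~\ref{thm:UHF}, and the extreme amenability of $\mathrm{Aut}(\mathbb{M}_q)$ from Proposition~\ref{Proposition:Levy}(2). Your explicit verification that ${}^{A}B$ is compact (via the parametrization of unital embeddings of matrix algebras by unitary conjugates of a fixed embedding) is a detail the paper leaves implicit, and it is done correctly.
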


Finally one can use the fact that the algebra $\mathcal{K}(H) $
of compact operators is the Fra\"iss\'e limit of the class of full matrix
algebras, and that $\mathrm{\mathrm{Aut}}\left( \mathcal{K}(H)
\right) $ is extremely amenable to obtain the analogues of the above results
where one considers not necessarily unital injective *-homomorphisms as
embeddings.  The same results hold for the finite width AF algebras and their associated Fra\"iss\'e classes as described in section \ref{Section:AF}.

\section{Future work}

\label{Section:future}


Both the Jiang-Su algebra and the infinite type UHF algebras are examples of
 strongly self-absorbing
C*-algebras   (\cite{toms_strongly_2007}). 
A unital C*-algebra $D$ is \emph{strongly self-absorbing} if 
there is a sequence of unitaries $u_n$ in $D\otimes D$ such that
\[
\Phi(a)=\lim_n u_n(a\otimes 1) u_n^*
\]
is well-defined for all $a\in D$ and  $\Phi$ is an isomorphism between $D$ and $D\otimes D$. 

In addition to $\cZ$ and the infinite type UHF algebras, the  only other currently known examples of strongly self-absorbing algebras 
are the Cuntz algebras $\mathcal{O}_{2}$ and $\mathcal{O}_{\infty }$ together with
the tensor products with $\mathcal{O}_{\infty }$ and infinite type UHF
algebras. Strongly self-absorbing algebras play a pivotal role in Elliott's classification program for nuclear, simple, separable, 
unital C*-algebras (see \cite[Chapters 5 and 7]{rordam_classification_2002} for the role of 
  $\mathcal{O}_{2}$ and $\mathcal{O}_{\infty }$ and 
  the more recent \cite{elliott_regularity_2008} and \cite{sato2014nuclear} for the role of $\cZ$)
  These algebras also have remarkable model-theoretic properties (see \cite[\S 2.2 and \S 4.5]{Fa:Logic} and \cite{FaHaRoTi}).  
Every strongly self-absorbing C*-algebra is an atomic model of its theory, and all atomic models can be viewed as 
Fra\"{\i}ss\'{e} limits of their type space.
Nevertheless, strongly self-absorbing C*-algebras share a number of properties with the Fra\"\i ss\'e limits not common to all atomic models, 
and it is natural to conjecture that all known, and perhaps all, strongly self-absorbing algebras
can be construed as    
Fra\"{\i}ss\'{e} 
limits of  
Fra\"{\i}ss\'{e} 
classes from which information about their automorphism group may be extracted. 

\begin{problem}
\label{Problem:Fraisse?}
Let $A$ be a strongly self-absorbing C*-algebra. Is $A$ a nontrivial Fra\"{\i}ss\'{e} limit?
\end{problem}

Since all strongly self-absorbing algebras are singly generated, and  $\mathcal{O}_{2}$ is moreover 
the universal algebra with two generators satisfying particularly simple relations, 
it may be necessary to consider 
Fra\"{\i}ss\'{e}  categories other than
C*-algebras, such as (unital) operator spaces (see \cite{lupini_uniqueness_2014}).


The important first step in proving that a nuclear algebra $A$ is strongly self-absorbing is to prove that it is tensorially self-absorbing, i.e., that $A \otimes A \cong A$.  Proofs that $\cO_2$ and $\cZ$ enjoy this property are nontrivial, and Elliott's proof that $\cO_2 \otimes \cO_2 \cong \cO_2$ in particular precipitated remarkable progress (see \cite{rordam_classification_2002}, \cite{elliott_regularity_2008}). 

In the case of $\cZ$, we note that if one considers the class $\mc{K}$ of dimension-drop algebras with distinguished traces as used in Section 4, we could modify the construction by considering a new class $\mc{K}'$ which is just the closure of $\mc{K}$ under taking finite tensor products together with the induced traces.  It would be interesting to know if this class is a Fra\"{\i}ss\'{e} class.  If so, this would give a direct proof that $\cZ$ is self-absorbing.


It is possible that viewing other strongly self-absorbing algebras as Fra\"iss\'e limits  may result in new proofs of tensorial self-absorbtion.  Such proofs would give information about these algebras, and this technique may also be useful in understanding Jacelon's non-unital analogue of $\cZ$ (\cite{jacelon2012simple}). 

\begin{problem} Is Jacelon's simple, monotracial, stably projectionless C*-algebra~$\cW$ a nontrivial  Fra\"{\i}ss\'{e} limit? 
Is $\cW\otimes \cW\cong \cW$? 
\end{problem} 

The construction of $\cW$ resembles the construction of $\cZ$, with the role of dimension-drop algebras being played by 
the so-called Razak building blocks (\cite{razak2002}). 

Another goal of this research is to shed new light on the automorphisms groups of
strongly self-absorbing C*-algebras such as $\mathcal{Z}$, $\mathcal{O}_{2}$, and $\mathcal{%
O}_{\infty }$.
 For example an affirmative
answer to Problem \ref{Problem:Fraisse?} would be a first step towards the
solution of the following problem.

\begin{problem}
\label{Problem:ExtremelyAmenable?} Suppose $A$ is strongly self-absorbing. 
Is $\mbox{Aut}(A)$ extremely amenable?
\end{problem}

\begin{problem}[{\cite[Question 9.1]{sabok_completeness_2013}}]
\label{Problem:universal?}
Is $\mbox{Aut}(\mathcal{O}_{2})$ a universal Polish group?
\end{problem}

Note that \cite[Theorem~7.4]{FaToTo:Turbulence} and the main result of 
\cite{sabok_completeness_2013} together imply that $\Aut(\cO_2)$ induces the universal orbit equivalence 
relation for Polish group actions. Moreover by Kirchberg's $\mathcal{O}_{2}$-absorption theorem \cite{kirchberg_exact_1995} every simple, 
separable,  nuclear and unital C*-algebra $A$ satisfies $A\otimes \mathcal{O}_{2}\cong 
\mathcal{O}_{2}$. In particular the automorphism group of $A$ embeds into
the automorphism group of $\mathcal{O}_{2}$ via the map $\alpha \mapsto
\alpha \otimes id_{\mathcal{O}_{2}}$.  
\bibliographystyle{amsplain}
\bibliography{Fraisse}

\providecommand{\bysame}{\leavevmode\hbox to3em{\hrulefill}\thinspace}
\providecommand{\MR}{\relax\ifhmode\unskip\space\fi MR }
\providecommand{\MRhref}[2]{%
  \href{http://www.ams.org/mathscinet-getitem?mr=#1}{#2}
}
\providecommand{\href}[2]{#2}
\begin{thebibliography}{10}

\bibitem{BenYaacov2013}
I.~Ben~Yaacov, \emph{Fra\"iss\'e limits of metric structures}, Journal of
  Symbolic Logic \textbf{80} (2015), no.~1, 100--115.

\bibitem{BYBHU}
I.~Ben~Yaacov, A.~Berenstein, C.W. Henson, and A.~Usvyatsov, \emph{Model theory
  for metric structures}, Model Theory with Applications to Algebra and
  Analysis, Vol. II (Z.~Chatzidakis et~al., eds.), London Math. Soc. Lecture
  Notes Series, no. 350, Cambridge University Press, 2008, pp.~315--427.

\bibitem{blackadar_operator_2006}
B.~Blackadar, \emph{Operator algebras}, Encyclopaedia of Mathematical Sciences,
  vol. 122, Springer-Verlag, Berlin, 2006.

\bibitem{Davidson1996}
K.~Davidson, \emph{{$C^*$}-algebras by example}, American Mathematical Society,
  Providence, {RI}, 1996.

\bibitem{EagleFarahKirchbergVignati}
C.~J. Eagle, I.~Farah, E.~Kirchberg, and A.~Vignati, \emph{Quantifier
  elimination in {C*}-algebras}, {arXiv}:1502.00573, 2015.

\bibitem{EagleGoldbringVignati}
C.~J. Eagle, I.~Goldbring, and A.~Vignati, \emph{The pseudoarc is a
  co-existentially closed continuum}, {arXiv}:1503.03443, 2015.

\bibitem{EagleVignati}
C.~J.\ Eagle and A.~Vignati, \emph{Saturation and elementary equivalence of
  {C*}-algebras}, Journal of Functional Analysis \textbf{269} (2015),
  2631--2664.

\bibitem{elliott_regularity_2008}
G.~A. Elliott and A.~S. Toms, \emph{Regularity properties in the classification
  program for separable amenable {$C^*$}-algebras}, Bulletin of the American
  Mathematical Society \textbf{45} (2008), no.~2, 229--245.

\bibitem{Fa:Logic}
I.~Farah, \emph{Logic and operator algebras}, Proceedings of the Seoul ICM
  (S.~Y. Jang, Y.~R. Kim, D.-W. Lee, and I.~Yie, eds.), vol.~II, Kyung Moon SA,
  2014, pp.~15--40.

\bibitem{Farah2013a}
\bysame, \emph{Selected applications of logic to classification problem of
  {C}*-algebras}, E-recursion, forcing and {C*}-algebras (C.T. Chong et~al.,
  eds.), Lecture Note Series, Institute for Mathematical Sciences, National
  University of Singapore, vol.~27, World Scientific, 2014, pp.~1--82.

\bibitem{FaHaRoTi}
I.~Farah, B.~Hart, M.~R{\o}rdam, and A.~Tikuisis, \emph{Relative commutants of
  strongly self-absorbing {C*}-algebras}, arXiv:1502.05228, 2015.

\bibitem{Farah2014a}
I.~Farah, B.~Hart, and D.~Sherman, \emph{Model theory of operator algebras
  {II}: Model theory}, Israel J. Math. \textbf{201} (2014), 477--505.

\bibitem{FaToTo:Turbulence}
I.~Farah, A.~S. Toms, and A.~T\"ornquist, \emph{Turbulence, orbit equivalence,
  and the classification of nuclear {C*}-algebras}, J. Reine Angew. Math.
  \textbf{688} (2014), 101--146.

\bibitem{fraisse_sur_1954}
R.~Fra{\"\i}ss{\'e}, \emph{Sur l'extension aux relations de quelques
  propri{\'e}t{\'e}s des ordres}, Annales Scientifiques de l'{\'E}cole Normale
  Sup{\'e}rieure. Troisi{\`e}me S{\'e}rie \textbf{71} (1954), 363--388.

\bibitem{giordano_extremely_2002}
T.~Giordano and V.~Pestov, \emph{Some extremely amenable groups}, Comptes
  Rendus Mathematique \textbf{334} (2002), no.~4, 273--278.

\bibitem{glimm_certain_1960}
J.~G. Glimm, \emph{On a certain class of operator algebras}, Transactions of
  the American Mathematical Society \textbf{95} (1960), no.~2, 318--340.

\bibitem{gromov_filling_1983}
M.~Gromov, \emph{Filling {R}iemannian manifolds}, Journal of Differential
  Geometry \textbf{18} (1983), no.~1, 1--147.

\bibitem{HartGoldbringSinclair}
B.~Hart, I.~Goldbring, and T.~Sinclair, \emph{The theory of tracial von
  {N}eumann algebras does not have a model companion}, J. Symb. Logic
  \textbf{78} (2013), no.~3, 1000--1004.

\bibitem{jacelon2012simple}
B.~Jacelon, \emph{A simple, monotracial, stably projectionless {C*}-algebra},
  Journal of the London Mathematical Society \textbf{87} (2013), 365--383.

\bibitem{JiangSu}
X.~Jiang and H.~Su, \emph{On a simple unital projectionless {$C^*$}-algebra},
  American Journal of Mathematics \textbf{121} (1999), 359--413.

\bibitem{kechris_fraisse_2005}
A.~S. Kechris, V.~Pestov, and S.~Todorcevic, \emph{Fra{\"\i}ss{\'e} limits,
  {R}amsey theory, and topological dynamics of automorphism groups}, Geometric
  \& Functional Analysis {GAFA} \textbf{15} (2005), no.~1, 106--189.

\bibitem{kirchberg_exact_1995}
E.~Kirchberg, \emph{Exact {C}*-algebras, tensor products, and the
  classification of purely infinite algebras}, Proceedings of the International
  Congress of Mathematicians, Vol. 1, 2 (Z{\"u}rich, 1994), Birkh{\"a}user,
  Basel, 1995, pp.~943--954.

\bibitem{kubis_proof_2013}
W.~Kubi{\'s} and S.~Solecki, \emph{A proof of uniqueness of the {G}urari\u\i\
  space}, Israel Journal of Mathematics \textbf{195} (2013), no.~1, 449--456.

\bibitem{lupini_uniqueness_2014}
M.~Lupini, \emph{Uniqueness, universality, and homogeneity of the
  noncommutative {G}urarij space}, arXiv:1410.3345, 2014.

\bibitem{Masumoto}
S.~Masumoto, \emph{Jiang-{S}u algebra as a {F}ra\"iss\'e limit},
  {arXiv}:1602.00124, 2016.

\bibitem{Melleray2014}
J.~Melleray and T.~Tsankov, \emph{Extremely amenable groups via continuous
  logic}, arxiv:1404.4590, 2014.

\bibitem{pestov_dynamics_2006}
V.~Pestov, \emph{Dynamics of infinite-dimensional groups}, University Lecture
  Series, vol.~40, American Mathematical Society, Providence, {RI}, 2006.

\bibitem{razak2002}
S.~Razak, \emph{On the classification of simple stably projectionless
  {$C^*$}-algebras}, Canad. J. Math. \textbf{54} (2002), no.~1, 138--224.

\bibitem{robert_classification_2012}
L.~Robert, \emph{Classification of inductive limits of 1-dimensional {NCCW}
  complexes}, Advances in Mathematics \textbf{231} (2012), no.~5, 2802--2836.

\bibitem{RoLaLa:Introduction}
M.~R{\o}rdam, F.~Larsen, and N.J. Laustsen, \emph{An introduction to {K}-theory
  for {C}$^*$ algebras}, London Mathematical Society Student Texts, no.~49,
  Cambridge University Press, 2000.

\bibitem{rordam_classification_2002}
M.~R{\o}rdam and E.~St{\o}rmer, \emph{Classification of nuclear {C}*-algebras.
  {E}ntropy in operator algebras}, Encyclopaedia of Mathematical Sciences, vol.
  126, Springer-Verlag, Berlin, 2002, Operator Algebras and Non-commutative
  Geometry, 7.

\bibitem{sabok_completeness_2013}
M.~Sabok, \emph{Completeness of the isomorphism problem for separable
  {C}*-algebras}, to appear in Invent. Math., 2015.

\bibitem{sato2014nuclear}
Y.~Sato, S.~White, and W.~Winter, \emph{Nuclear dimension and {$\mathcal
  Z$}-stability}, Invent. Math. \textbf{202} (2015), 893--921.

\bibitem{Schochet}
C.~Schochet, \emph{Topological methods for {C*}-algebras {II}: {G}eometric
  resolutions and the {K}\"unneth formula}, Pacific Journal of Mathematics
  \textbf{98} (1982), 399--445.

\bibitem{Schoretsanitis2007}
K.~Schoretsanitis, \emph{Fra\"iss\'e theory for metric structures}, Ph.D.
  thesis, University of Illinois at Urbana-Champaign, 2007.

\bibitem{Hannes2014}
H.~Thiel and W.~Winter, \emph{The generator problem for {$\cZ$}-stable
  {C*}-algebras}, Transactions of the American Mathematical Society
  \textbf{366} (2014), 2327--2343.

\bibitem{toms_strongly_2007}
A.~S. Toms and W.~Winter, \emph{Strongly self-absorbing {C}*-algebras},
  Transactions of the American Mathematical Society \textbf{359} (2007), no.~8,
  3999--4029.

\end{thebibliography}

\end{document}